\documentclass[11 pt]{amsart}

\usepackage[margin=1.25in]{geometry}
\usepackage{setspace}
\linespread{1.18}
\usepackage{parskip}
\setlength{\parindent}{10pt}
\usepackage{amsaddr}

\usepackage{amssymb}
\usepackage{graphicx}
\usepackage{asymptote}
\usepackage[hidelinks]{hyperref}

\theoremstyle{plain}
\newtheorem{theorem}{Theorem}
\newtheorem{lemma}{Lemma}[section]

\newtheorem{prop}{Proposition}[section]

\numberwithin{equation}{section}

\newcommand{\E}[1]{\mathbf{E} \, #1}
\newcommand{\pr}[1]{\mathbf{Pr}\left[#1\right]}
\newcommand{\ind}[1]{\mathbf{1}_{\{ #1 \}}}
\newcommand{\eps}{\varepsilon}
\newcommand{\N}{\mathbb{N}}
\newcommand{\Z}{\mathbb{Z}}

\newcommand{\lm}{\lambda}
\newcommand{\G}{\mathbf{G}}

\DeclareMathOperator{\pa}{pa}
\DeclareMathOperator{\ch}{ch}

\begin{document}

\title{On the local geometry of graphs in terms of their spectra}

\author{Brice Huang}
\address{Department of Mathematics, Massachusetts Institute of Technology, \\\ Cambridge, MA, USA.}
\email{bmhuang@mit.edu}

\author{Mustazee Rahman}
\address{Department of Mathematics, Massachusetts Institute of Technology, \\ Cambridge, MA, USA and \\
	Department of Mathematics, KTH Royal Institute of Technology, \\ Stockholm, Sweden.}
\email{mustazee@gmail.com}

\date{}

\keywords{spectral graph theory, Ramanujan graphs, sofic graphs, unimodular networks, graph limit}
\subjclass[2010]{Primary: 05C50; Secondary: 05C81, 15B52, 60B20}

\begin{abstract}
	\normalsize
	In this paper we consider the relation between the spectrum and the number of short cycles in large graphs.
	Suppose $G_1, G_2, G_3, \ldots$ is a sequence of finite and connected graphs that share a
	common universal cover $T$ and such that the proportion of eigenvalues of $G_n$ that
	lie within the support of the spectrum of $T$ tends to 1 in the large $n$ limit. This is a weak
	notion of being Ramanujan. We prove such a sequence of graphs is asymptotically locally tree-like.
	This is deduced by way of an analogous theorem proved for certain infinite sofic graphs and unimodular networks,
	which extends results for regular graphs and certain infinite Cayley graphs.
\end{abstract}
\maketitle

\newpage
\section{Introduction} \label{sec:intro}

This paper is about how the spectrum of a big, bounded degree graph determines its
local geometry around typical vertices. For a finite and connected graph $G$, let
$$\lm_1(G) > \lm_2(G) \geq \lm_3(G) \geq \cdots$$ be the eigenvalues of its adjacency matrix.
Let $T$ be the universal cover tree of $G$ and denote by $\rho(T)$ its spectral radius, which
is the operator norm of the adjacency matrix of $T$ acting on $\ell^2(T)$. If $G$ is
$d$-regular then $\lm_1(G) = d$ and $\rho(T) = 2\sqrt{d-1}$, $T$ being the $d$-regular tree.

It is easy to see that $\lm_1(G) \geq \rho(T)$. Various extensions of the Alon-Boppana
Theorem state that for every $\eps > 0$, a positive proportion of the eigenvalues of $G$ lie
outside the interval $[-\rho(T) + \eps, \rho(T) - \eps]$ independently of the size of $G$;
see e.g.~\cite{Ci, Gr, GZ, Ra, Se}. What happens when the eigenvalues actually concentrate
within $[-\rho(T), \rho(T)]$?

A graph $G$ is Ramanujan if $|\lm_i(G)| \leq \rho(T)$ for every $i \geq 2$.
It is a fairly well-understood theme that large, $d$-regular Ramanujan graphs
locally resemble the $d$-regular tree in that they contain few short cycles.
For an illustration of such results see \cite{AGV, Br2, HLW, LP, LPS} and references therein.
This relation is not as well understood for sparse, irregular graphs. We prove the
following with regards to the spectrum and local geometry.

Suppose $G_1, G_2, G_3, \ldots$ is a sequence of finite and connected graphs.
They are \emph{weakly Ramanujan} if they have a common universal cover tree $T$
and if, counting with multiplicity,
\begin{equation} \label{eqn:Ramanujan}
\frac{\# \{\text{Eigenvalues of}\; G_n\; \text{s.t.} \;\; |\lm_i(G_n)| \leq \rho(T) \}}{|G_n|}
\, \longrightarrow \, 1\;\; \text{as}\; n \to \infty.
\end{equation}
\begin{theorem} \label{thm:0}
	Consider a sequence of weakly Ramanujan graphs as in \eqref{eqn:Ramanujan}.
	Suppose that $|G_n| \to \infty$. Then the graphs are asymptotically locally tree-like in that for every $r > 0$,
	$$ \frac{\# \{\text{Vertices}\; v \in G_n \; \text{s.t.~its r-neighbourhood is a tree}\}}{|G_n|}
	\, \longrightarrow \, 1\;\; \text{as}\; n \to \infty.$$
\end{theorem}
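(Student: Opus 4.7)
The plan is to reduce Theorem~\ref{thm:0} to a statement about its Benjamini--Schramm (local weak) limits. Let $U(G_n)$ denote the law of the rooted graph obtained by sampling a uniform vertex of $G_n$ as root; since degrees in $G_n$ are bounded (by those in $T$), the sequence $\{U(G_n)\}$ is tight in the local topology on rooted graphs, and along any subsequence converges weakly to the law of a unimodular random rooted graph $(\mathcal{G}, o)$. The conclusion of Theorem~\ref{thm:0} is equivalent to the assertion that, for every such subsequential limit and every $r$, the $r$-ball about $o$ in $\mathcal{G}$ is almost surely a tree. Hence it suffices to prove this for an arbitrary limit $(\mathcal{G}, o)$.

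I would then transport both hypotheses to $(\mathcal{G}, o)$. First, the universal cover of $\mathcal{G}$ is $T$ almost surely: being covered by $T$ can be recast as a purely local condition on rooted neighborhoods (for instance, matching the local non-backtracking walk tree with that of $T$), so it is preserved under local weak limits of graphs sharing the cover $T$. Second, the spectral condition \eqref{eqn:Ramanujan}: the empirical spectral distribution of $G_n$ equals the expected spectral measure at the root under $U(G_n)$, and weak convergence of $U(G_n)$ forces weak convergence of these spectral measures, because the $k$-th moment $\mathbf{E}\,\mu_{(G,o)}(x^k)$ counts rooted closed walks of length $k$, a local quantity. Combined with \eqref{eqn:Ramanujan} this shows that the expected spectral measure of $(\mathcal{G}, o)$ is supported in $[-\rho(T), \rho(T)]$.

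The heart of the matter is the following infinite statement, which is the analogous theorem flagged in the abstract: a unimodular random rooted graph whose universal cover is almost surely $T$ and whose expected spectral measure is supported in $[-\rho(T), \rho(T)]$ must in fact equal $T$ almost surely. Once this is in hand, $\mathcal{G}$ is a tree, its $r$-balls around $o$ are trees for every $r$, and unwinding the local weak limit recovers the theorem. This sofic step is the main obstacle. For regular $T$ the analogue is classical: cycles contribute extra closed walks, and Kesten-type arguments show that any graph covered by the $d$-regular tree but distinct from it has spectral radius strictly greater than $2\sqrt{d-1}$. For irregular $T$ the degrees vary, so one cannot appeal to eigenvector expansions on a homogeneous tree directly. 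Instead I would pursue a mass-transport argument exploiting unimodularity to compare the spectral measure of $(\mathcal{G}, o)$ with that of $(T, \tilde{o})$ at a lift $\tilde{o}$, and then extract, from the saturation of the inequality $\rho(\mathcal{G}) \leq \rho(T)$, the conclusion that $\mathcal{G}$ contains no cycles at all.
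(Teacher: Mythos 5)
Your reduction is the same as the paper's: pre-compactness from the bounded degrees, passing to a subsequential local weak limit $(\G,o)$, transporting the covering condition and the spectral condition to the limit (your ``expected spectral measure supported in $[-\rho(T),\rho(T)]$'' is the content of the upper bound in Lemma~\ref{lem:Ramanujan}, i.e.\ $\rho(\G)\le\rho(T)$), and then invoking a rigidity statement for the infinite limit. The problem is that this rigidity statement --- the paper's Theorem~\ref{thm:1}, which the paper itself calls its key result --- is exactly what you do not prove. Your final paragraph only names it as ``the main obstacle'' and gestures at ``a mass-transport argument comparing spectral measures and extracting, from saturation of $\rho(\G)\le\rho(T)$, that $\G$ has no cycles.'' That is not an argument: nothing in a direct comparison of spectral measures explains why the presence of cycles with positive probability forces $\rho(\G)$ strictly above $\rho(T)$ when $T$ is an arbitrary quasi-transitive tree. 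The paper's proof of this step is substantial: it counts closed walks versus purely backtracking walks via the inequality \eqref{eqn:main} involving norms of Markov operators on the fundamental group, uses the mass transport principle (Lemma~\ref{lem:2}) and the quasi-transitivity of $T$ (Lemma~\ref{lem:3}) to turn this into the quantitative gap of Lemma~\ref{lem:4}, and then needs Kesten's theorem on free groups of rank at least two, which in turn requires producing two \emph{disjoint} cycles near the root (the bouquet of Lemma~\ref{lem:8}) --- itself a mass-transport argument using that $\G$ is a.s.\ infinite. None of these ideas appear in your sketch, so the heart of the theorem is missing.

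A second, concrete defect: your formulation of the key infinite statement omits the hypothesis that $(\G,o)$ is almost surely infinite, and as stated it is false --- a fixed finite cycle, rooted uniformly, is unimodular, is covered by $T=\Z$, and has spectral measure supported in $[-2,2]=[-\rho(\Z),\rho(\Z)]$, yet it is not $\Z$. Correspondingly, you never use the assumption $|G_n|\to\infty$ anywhere in your reduction, whereas the paper flags it as necessary precisely because of this example. To repair the reduction you must argue that every subsequential limit is a.s.\ infinite (e.g.\ bounded degrees plus $|G_n|\to\infty$ force the diameters to diverge, so $|B_R(\G,o)|\ge R$ a.s.\ for every $R$), and include infiniteness in the rigidity statement before attempting to prove it.
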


Let us make a few remarks about Theorem \ref{thm:0}. First, if the $r$-neighbourhood of a
vertex $v \in G$ is a tree then it agrees with the $r$-neighbourhood of any vertex
$\hat{v}$ in the universal cover of $G$ that maps to $v$ under the cover map.
So roughly speaking, large weakly Ramanujan graphs locally look like their universal
covers around most vertices.

Second, it is natural in our context to assume a sequence of graphs share a common
universal cover. For one thing it is a generalization of a sequence of $d$-regular graphs,
$(a,b)$-biregular graphs, etc. But more so, it provides a way to compare the
spectrum and geometry of graphs with differing sizes on a common scale.
For example, two finite graphs with the same universal cover have the same degree distribution,
hence, also the same average and maximal degree. They also have the same maximal eigenvalue
according to a theorem in \cite{Le}. The definition of Ramanujan graphs in terms of
their universal covers was introduced in \cite{Gr} (stated also in \cite{HLW}).

The version of Theorem \ref{thm:0} for $d$-regular weakly Ramanujan graphs
has been proved in \cite{AGV}. In this case many tools, such as the Green's
function and spectral measure of the $d$-regular tree, are available in precise form.
This is lacking for general universal covers where even the computation of the
spectral radius is difficult (although an algorithm is provided in \cite{Na} and
various bounds are given in \cite{H, Ra}).

Theorem \ref{thm:0} is also motivated by the classical work of Kesten which relates
the geometry of groups to their spectra \cite{Ks}. Namely, the $d$-regular Cayley graph of a
finitely generated group is amenable if and only if its spectral radius is the degree $d$.
In the other direction, what happens to the geometry when the spectrum is ``as small as possible",
which is to say that the spectral radius is $2 \sqrt{d-1}$? One can ask this question for
finite graphs, bounded degree sofic graphs, or more generally, for unimodular networks.
Theorem \ref{thm:0} considers a finitary case and Theorem \ref{thm:1} below certain sofic
graphs. The proof of Theorem \ref{thm:1} also applies to analogous unimodular networks.

The assumption in Theorem \ref{thm:0} that $|G_n| \to \infty$ is necessary.
For example, if all the graphs are equal to a common cyclic graph then the sequence is
weakly Ramanujan. (The universal cover is $\mathbb{Z}$ with $\rho(\mathbb{Z}) = 2$
and all the eigenvalues lie in the interval $[-2,2]$.) However, this is the only obstruction as
being weakly Ramanujan implies $|G_n| \to \infty$ so long as the common average degree
of the graphs is larger than 2. This follows from the following theorem, which
asserts that $\lm_1(G_n) > \rho(T)$ when the common average degree is more than 2.

\begin{theorem} \label{thm:2}
Let $G$ be a finite and connected graph with universal cover $T$.
Then $\lm_1(G) = \rho(T)$ if and only if $G$ has at most one cycle or, equivalently,
if and only if the average degree of $G$ is at most 2.
\end{theorem}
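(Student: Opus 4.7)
The plan is to separate the elementary combinatorial equivalence from the spectral one. With $|V(G)| = n$ and $|E(G)| = m$, connectedness forces $m \geq n - 1$, and the first Betti number $\beta_1(G) = m - n + 1$ counts independent cycles. Since the average degree equals $2m/n$, the condition that the average degree is at most $2$ is equivalent to $m \leq n$, hence to $\beta_1(G) \leq 1$, that is to $G$ being a tree or unicyclic. This gives the second equivalence in the statement.

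For the spectral part I would first treat the direction ``at most one cycle implies $\lm_1(G) = \rho(T)$''. If $G$ is a tree this is trivial since $T = G$. If $G$ is unicyclic, the deck group of $T \to G$ is $\pi_1(G) \cong \Z$, and $A_T$ admits a Bloch/Fourier decomposition: choosing a spanning tree of $G$ whose unique non-tree edge $e^*$ generates the cycle, one obtains
\[
\rho(T) \;=\; \|A_T\| \;=\; \sup_{\theta \in [0,1)} \|A_G(\theta)\|,
\]
where $A_G(\theta)$ is the adjacency matrix of $G$ with the weight on $e^*$ multiplied by $e^{\pm 2\pi i \theta}$ in the two orientations. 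The pointwise triangle inequality $|A_G(\theta) f|(v) \leq (A_G |f|)(v)$ gives $\|A_G(\theta)\| \leq \|A_G\| = \lm_1(G)$ for every $\theta$, while $\theta = 0$ recovers the untwisted operator. Hence $\rho(T) = \lm_1(G)$.

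For the reverse implication, I would argue by contrapositive and show that if $G$ has at least two independent cycles then $\lm_1(G) > \rho(T)$ strictly. In that case $\pi_1(G)$ is a free group $F_k$ with $k \geq 2$, which is non-amenable. The amenable-cover theorem, proved in various forms by Grigorchuk, Cohen, and Sunada, states that for a regular cover $\tilde G \to G$ of a finite connected graph, $\rho(\tilde G) = \lm_1(G)$ if and only if the deck group is amenable. Applied to the universal cover $T \to G$, non-amenability of $F_k$ forces $\rho(T) < \lm_1(G)$.

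The main obstacle is this last step. The $\Z^k$-abelian cover $\hat G \to G$ already satisfies $\rho(\hat G) = \lm_1(G)$ by the same Fourier argument applied to the $k$-dimensional torus, so the strict gap only opens when passing from $\hat G$ to $T$, whose deck group $[F_k, F_k]$ is the non-abelian commutator subgroup. This is precisely the phenomenon quantified by Grigorchuk's cogrowth formula combined with Kesten's amenability criterion, and I expect the cleanest route is to cite the amenable-cover theorem directly rather than reprove it within this paper.
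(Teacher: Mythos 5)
Your combinatorial equivalence and your treatment of the unicyclic case are correct, and the latter is a genuinely different route from the paper's. The paper (Proposition \ref{prop:unicyclic}) describes $T$ explicitly as a bi-infinite chain of copies of $G$ minus one cycle edge and evaluates the Rayleigh quotient of the test vector $\frac{1}{\sqrt N}\,y\circ\pi$ supported on $N$ consecutive copies, obtaining $\rho(G)-\frac{2}{N}y_{v_1}y_{v_n}$; your Floquet--Bloch decomposition over the dual circle of the deck group $\mathbb{Z}$ gives both inequalities at once and also justifies your closing remark that the maximal abelian $\mathbb{Z}^k$-cover always attains $\lambda_1(G)$. Two small points: state the twisting so that it also covers the degenerate cycles the paper allows (a loop, or a $2$-cycle formed by parallel edges), and note that the supremum over $\theta$ equals the essential supremum by norm-continuity of $\theta\mapsto A_G(\theta)$, so $\theta=0$ is not lost.

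The gap is in the multicyclic direction, which is where all the content of Theorem \ref{thm:2} lies. You delegate it to an ``amenable-cover theorem'' in the form: for a regular cover of a finite connected graph, $\rho(\tilde G)=\lambda_1(G)$ iff the deck group is amenable. In the generality you need --- adjacency operators, an \emph{irregular} finite base graph, and the strict-inequality direction --- this is not an off-the-shelf citation: Grigorchuk's and Cohen's cogrowth criteria concern covers of a bouquet of loops (Cayley graphs of quotients of $F_k$), Northshield's extension concerns regular graphs, and the Brooks/Sunada statements are about Laplacians on manifolds or about the easy (amenable $\Rightarrow$ equality) direction. The statement you want to quote is essentially Proposition \ref{prop:multicyclic} itself, which the paper proves self-containedly by an edge-weighting argument (weights $\Gamma$ strictly increasing along the directed $2$-core and decaying weights $\Delta$ on the hanging trees, fed into $2|x_ux_v|\le\eta x_u^2+\eta^{-1}x_v^2$ together with the Perron eigenvector of $G$, giving a Schur-type bound $|f_T(x)|\le\sum_u g(u)x_u^2$ with $g\le\rho(G)-\epsilon$ uniformly); the paper notes this is related to the ``Gabber lemma'' of \cite{GH}. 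If you insist on the citation route, you must build the bridge for irregular graphs yourself: conjugate $A_T/\lambda_1(G)$ by the diagonal operator given by the lifted Perron eigenvector $y\circ\pi$ (a Doob/ground-state transform). This produces a stochastic, bounded-range kernel on $T$, reversible with respect to $m(u)=y(\pi(u))^2$ and invariant under the free, cocompact action of $\pi_1(G)=F_k$ with $k\ge2$, whose $\ell^2(m)$-norm is exactly $\rho(T)/\lambda_1(G)$; then a Kesten-type theorem for group-invariant reversible walks on quasi-transitive graphs (Soardi--Woess, Salvatori: spectral radius $1$ iff the group is amenable and unimodular, and a discrete group acting freely is unimodular) yields $\rho(T)<\lambda_1(G)$. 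Without some such bridge, ``apply the amenable-cover theorem to $T\to G$'' is an appeal to a result that, in the form quoted, does not cover the case at hand.
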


This theorem is somewhat of an analogue of Theorem \ref{thm:0} for a single finite graph.
The proof turns out to be a bit delicate. For instance, consider the bowtie graph $G$
obtained by gluing together two triangles at a common vertex. It has
$\lm_1(G) = (1+ \sqrt{17})/2$ and $\rho(T) =(\sqrt{3} +\sqrt{11})/2 $.
The spectral gap is about $0.04$ and the average degree is also smaller
than $\rho(T)$. In general, the spectral gap can be arbitrarily small
for graphs formed by gluing together two large cycles at a common vertex.
The proof of Theorem \ref{thm:2} is based on orienting and weighting the edges
of a finite graph in a way that allow to compare the norm of its adjacency matrix
with the cover's via the Rayleigh variational principle. We have learned it is related
to ``Gabber lemma", see \cite{GH}.

Theorem \ref{thm:0} is proved in the following section. The idea behind the proof is to
reduce the theorem to an analogous theorem about certain infinite random rooted graphs
(sofic graphs and unimodular networks) by using the notion of local convergence of graphs.
The key result of the paper is a proof of the analogue of Theorem \ref{thm:0} for these infinite
graphs, which is Theorem \ref{thm:1} below, proved in Section \ref{sec:infinite}.
The proof establishes a lower bound on the spectral radius of such graphs in terms of a probabilistic notion
of cycle density. Theorem \ref{thm:2} is then proved in Section \ref{sec:finite}.
Section \ref{sec:conc} concludes with some questions.

\section{A reduction of Theorem \ref{thm:0}} \label{sec:thm0}
We begin with the notion of local convergence of graphs and some of its properties
used for the proof of Theorem \ref{thm:0}. A complete account, including proofs,
may be found in \cite{AL, BS, Br, El}.

Let $B_r(G,v)$ be the $r$-neighbourhood of a vertex $v$ in a graph $G$.
A sequence of finite and connected graphs $G_1, G_2, G_3, \ldots$ converges locally if the following holds.
For every $r$ and every rooted, connected graph $(H,o)$ having radius at most $r$ from the root $o$,
the ratio
$$ \frac{\# \{ \text{Vertices}\; v \in G_n \;\text{s.t.}\; B_r(G_n,v) \cong (H,o)\}}{|G_n|}\;\; \text{converges as}\; n \to \infty.$$
The isomorphism relation $\cong$ is for rooted graphs, i.e.~the isomorphism must take the root
of one to the other. Local convergence is also known as Benjamini-Schramm convergence as it
was formulated by them in \cite{BS}.

A locally convergent sequence of graphs may be represented as a random rooted graph in the following way.
Let $\mathcal{G}$ be the set of all rooted and connected graphs whose vertex sets are subsets of the integers.
Identify the graphs in $\mathcal{G}$ up to their rooted isomorphism class. The set $\mathcal{G}$ is a complete and
separable metric space with the distance between $(H,o)$ and $(H',o')$ being $2^{-r}$, where $r$ is the maximal
integer such that $B_r(H,o) \cong B_r(H',o')$. A random rooted graph is simply a Borel probability measure
on $\mathcal{G}$ or, in other words, a $\mathcal{G}$-valued random variable $(\G,o)$ that is Borel-measurable.
Given a locally convergent sequence of graphs as above, there is a random rooted graph $(\G,o)$ such that
for every $r$ and $(H,o)$ as above,
$$ \frac{\# \{ \text{Vertices}\; v \in G_n \;\text{s.t.}\; B_r(G_n,v) \cong (H, o)\}}{|G_n|}
\longrightarrow \pr{B_r(\G,o) \cong (H,o)}.$$

A random rooted graph that is obtained from a locally convergent sequence
of finite graphs is called \emph{sofic}. Examples include Cayley graphs of amenable groups
such as $\Z^d$, regular trees, as well as Cayley graphs of residually finite groups. More examples
may be found in \cite{AL, BS, Br, El}. Sophic graphs satisfy an important property
known as the \emph{mass transport principle}, as we explain.

Suppose $(\G,o)$ is sofic. Consider a bounded and measurable function $F(G,u,v)$ defined over
doubly rooted graphs such that it depends only on the double-rooted isomorphism class
of $(G,u,v)$. The mass transport principle states that
$$ \E{\sum_{v \in \G} F(\G,o,v)} = \E{\sum_{v \in \G} F(\G,v,o)}.$$
The above is readily verified for a finite graph rooted at a uniformly random vertex,
and it continues to hold in the local limit, which is why it holds for a sofic graph.
Random rooted graphs that satisfy the mass transport principle are called unimodular networks.
It is not known whether every unimodular network is a sofic graph.

Let us describe the universal cover of a sofic graph.
Recall the universal cover of a graph $G$ is the unique tree $T$ for which there
is a surjective graph homomorphism $\pi: T \to G$, called the cover map, such
that $\pi$ is locally bijective: for every $\hat{v} \in T$, $\pi$ provides a bijection
$B_1(T, \hat{v})  \overset{\pi}{\longleftrightarrow} B_1(G, \pi(\hat{v}))$. If $\pi(\hat{v}) = \pi(\hat{u})$
then the rooted graphs $(T, \hat{v}) \cong (T, \hat{u})$. Therefore, the universal
cover of a sofic graph $(\G,o)$ may be defined as its samplewise
universal cover $(\mathbf{T},\hat{o})$, where $\hat{o}$ is any vertex that is
mapped to $o$ by the cover map.

The spectral radius of a sofic graph $(\G,o)$ is defined as follows.
Let $W_k(G,o)$ be the set of closed walks of length $k$ from $o$ in a graph $G$
and denote by $|W|_k(G,0)$ its size. The spectral radius of $(\G,o)$ is
$$ \rho(\G) = \lim_{k \to \infty} \, \left( \E{|W|_{2k}(\G,o)} \right )^{1/2k}.$$
Recall that the spectral radius of a connected graph $G$ is also the exponential
growth rate of $|W|_{2k}(G,v)$ for any vertex $v$. The relation of $\rho(\G)$
to the adjacency matrix of $\G$ is that it equals the sup norm, $|| \rho(G,o)||_{\infty}$,
of the samplewise spectral radius of $(\G,o)$. It is also the largest element in the support
of the ``averaged" spectral measure of $(\G,o)$, which is a probability measure over
the reals with moments $\E{|W|_{0}(\G,o)}, \E{|W|_{1}(\G,o)}, \E{|W|_{2}(\G,o)}, \ldots$.

We prove the following regarding the spectrum and geometry of an infinite sofic graph.
\begin{theorem} \label{thm:1}
	Let $(\G,o)$ be an almost surely infinite sofic graph that is the local limit of a sequence of finite graphs
	sharing a common universal cover $T$. If $\rho(\G) = \rho(T)$ then $\G$ is isomorphic
	to $T$ almost surely.
\end{theorem}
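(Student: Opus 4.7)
The plan is to argue by contrapositive: suppose the event $\G \not\cong T$ has positive probability and deduce $\rho(\G) > \rho(T)$, contradicting the hypothesis. Since $T$ is the samplewise universal cover of $\G$, this event coincides with $\G$ having a cycle, so there exist $R$ and $p > 0$ with $\pr{B_R(\G, o) \text{ contains a cycle}} \geq p$. A mass transport argument then upgrades this to the statement that $o$ itself lies on some cycle of length at most $2R$ with positive probability.

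The central bookkeeping device is the walk-lifting formula. Let $\pi : T \to \G$ be the samplewise universal cover and fix a lift $\hat o \in \pi^{-1}(o)$. Each closed walk of length $n$ at $o$ in $\G$ lifts uniquely to a walk in $T$ from $\hat o$ to some vertex of the fiber $\pi^{-1}(o)$, and conversely, so
\[
|W|_n(\G, o) \;=\; \sum_{\hat u \in \pi^{-1}(o)} (A_T^n)_{\hat o, \hat u}.
\]
The diagonal contribution $\hat u = \hat o$ alone equals $|W|_n(T, \hat o)$, which recovers $\rho(\G) \geq \rho(T)$. The challenge is to show that the off-diagonal contributions --- present precisely when $\G$ has cycles --- strictly enlarge the exponential growth rate.

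I would analyze this through non-backtracking walks. Non-backtracking walks in the tree are geodesics, so non-backtracking closed walks at $o$ in $\G$ of length $\ell$ correspond bijectively to vertices of $\pi^{-1}(o) \setminus \{\hat o\}$ at tree-distance $\ell$ from $\hat o$. Traversing any short cycle through $o$ produces such walks, so by the previous step, $\E{N_\ell(\G, o)} \geq c > 0$ for some $\ell \leq 2R$, where $N_\ell$ is the corresponding count. Every closed walk in $\G$ at $o$ has a unique non-backtracking reduction and is reconstructed by inserting, at each vertex of the reduced walk, an \emph{excursion} which lifts to a closed walk in $T$. This decomposition gives
\[
|W|_{2k}(\G, o) \;=\; \sum_{j \geq 0} \sum_{\gamma} \, [z^{2k - j}] \prod_{i=0}^{j} G_{T, \hat v_i}(z),
\]
where $\gamma$ ranges over non-backtracking closed walks at $o$ of length $j$ with lifted vertex sequence $(\hat v_0, \ldots, \hat v_j)$, and $G_{T, \hat v}(z) = \sum_m (A_T^m)_{\hat v, \hat v} z^m$ is the closed-walk generating function on $T$ at $\hat v$, with radius of convergence $1/\rho(T)$. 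Taking expectations and extracting asymptotics from the singularity of the product generating functions at $z = 1/\rho(T)$, one reduces $\rho(\G) > \rho(T)$ to the divergence of a weighted series in $\E{N_j(\G, o)}$ with weights determined by $T$.

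The main obstacle is establishing this divergence from merely positive cycle density. For a finite cycle graph --- where $\rho(\G) = \rho(T)$ by Theorem \ref{thm:2} --- the counts $N_j$ are bounded in $j$ and the weighted series converges, so the argument must exploit that $\G$ is both infinite and unimodular. Mass transport should combine the positive density of cycles throughout $\G$ into exponentially many non-backtracking walks at $o$ via re-concatenation across the network, but carrying this out in the absence of any explicit description of the spectral measure of the arbitrary tree $T$ is the technical crux.
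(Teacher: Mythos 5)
Your opening reduction---pass to the contrapositive, use mass transport to get $\pr{o \text{ lies in a short cycle of } \G} > 0$, and then try to show this forces $\rho(\G) > \rho(T)$---is exactly the paper's strategy, and the walk-lifting observation $|W|_n(\G,o) = \sum_{\hat u \in \pi^{-1}(o)} (A_T^n)_{\hat o, \hat u}$ is correct. But the proposal stops precisely where the proof has to begin. Your final paragraph concedes that deducing the divergence of the weighted series from positive cycle density is ``the technical crux''; that crux \emph{is} the theorem, and as you yourself note via the cycle-graph example, no estimate that only uses positive cycle frequency at the root can work---some structural input from infiniteness and unimodularity is indispensable and is not supplied. (A secondary issue: the displayed identity for $|W|_{2k}(\G,o)$ is not correct as stated. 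Reconstructing a walk from its non-backtracking reduction requires the excursion inserted at each vertex of $\gamma$ to avoid immediately traversing the adjacent edges of $\gamma$, otherwise the reduction changes; the correct decomposition involves restricted/first-return generating functions, which are exactly the objects that are unavailable in explicit form for a general universal cover $T$.)

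The missing ideas, which the paper supplies, are two. First, instead of trying to control Green's functions of $T$, one uses the inequality \eqref{eqn:main} from \cite{AGV}: the ratio of closed walks to purely backtracking walks is bounded below in terms of norms of Markov operators of symmetric random walks on the fundamental group $\pi(\G,o)$ (a free group), and the mass transport principle together with crude degree bounds (Lemmas \ref{lem:2}--\ref{lem:4}) converts this into
$\log\rho(\G) - \log\rho(T) \geq \E{-\log ||M_{2k}||(o,o)}/(4k\,\Delta^{2d+2k})$.
Second, and decisively, Lemma \ref{lem:8} shows that because $\G$ is almost surely infinite and unimodular, positive probability of an $\ell$-cycle at the root yields, with positive probability, a \emph{bouquet}: two disjoint $\ell$-cycles within bounded distance of $o$. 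These produce two mutually free elements in the support of the step distribution, so the subgroup generated is free of rank at least $2$, and Kesten's theorem gives the strict bound $||M|| < 1$ of \eqref{eqn:Ks}, hence a uniform spectral gap through Lemma \ref{lem:4}. It is exactly this rank-$\geq 2$ freeness that distinguishes the infinite setting from your single-cycle counterexample, where the fundamental group is $\Z$, the walk is amenable, and the operator norm is $1$. Your closing sentence gestures at ``re-concatenation across the network via mass transport,'' which is the right instinct, but without an argument of the bouquet/Kesten type (or a genuine substitute) the proposal does not establish the strict inequality and so does not prove the theorem.
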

This theorem (and its proof) may be extended to unimodular networks whose universal cover is
deterministic and quasi-transitive.

\subsection{Reducing Theorem \ref{thm:0} to sofic graphs}
The theorem may be reformulated as asserting that given a sequence of weakly
Ramanujan graphs sharing a common universal cover $T$, and with their sizes
tending to infinity, the sequence converges locally to $T$. The root of $T$
will be a random vertex whose distribution is determined by the convergent sequence,
although it does not depend on the particular sequence.

More precisely, as $T$ is the universal cover of a finite graph, it is quasi-transitive. Let
\begin{equation} \label{eqn:orbit}
\big \{ \hat{v}_1, \hat{v}_2, \ldots, \hat{v}_m  \big \}
\end{equation}
be vertices that make a set of representatives for $T / \mathrm{Aut}(T)$.
There is a probability measure $(p_1, \ldots, p_m)$ on them such that if vertex $\hat{o}$ is
chosen according to it then $(T, \hat{o})$ satisfies the mass transport principle; see \cite{AL}.
Any sequence of graphs converging locally to $T$ has limit $(T, \hat{o})$. Moreover,
if a finite graph $G$ is covered by $T$ then $p_j$ is the proportion of vertices $v \in G$
such that the vertices of $\pi^{-1}(v)$ can be sent to $\hat{v}_j$ by automorphisms of $T$
(all vertices in $\pi^{-1}(v)$ belong to the same orbit); see \cite{Ma}.

Suppose $G_1, G_2, G_3, \ldots$ is a sequence of weakly Ramanujan graphs as in
the statement of the theorem. Since they share a common universal cover,
their vertex degrees are bounded by some integer $\Delta$. By a simple
diagonalization argument (there are at most $\Delta^{r+1}$ rooted
graphs of radius $ \leq r$ and maximal degree $\leq \Delta$), the sequence is
pre-compact in the local topology. We must prove that its only limit point is $(T, \hat{o})$.

Suppose $(\G,o)$ is a limit point of the sequence. Then its universal cover is isomorphic
to $(T,\hat{o})$ almost surely. This is because the sequence has a common universal cover
and the universal cover is a continuous mapping of its base graph in the local topology; see \cite{Ma}.
Theorem \ref{thm:0} is proved if $\G$ is isomorphic to $T$ almost surely as unrooted graphs.
The mass transport principle specifies the distribution of the root as we have explained.
By Lemma \ref{lem:Ramanujan} below, $\rho(\G) = \rho(T)$. The theorem now follows from Theorem \ref{thm:1}.

\begin{lemma} \label{lem:Ramanujan}
	Let $G_1, G_2, G_3, \ldots$ be a locally convergent sequence of weakly Ramanujan graphs.
	Suppose $(\G,o)$ is its limit and $T$ is the common universal cover.
	Then $\rho(\G) = \rho(T)$.
\end{lemma}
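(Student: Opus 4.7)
The plan is to deduce $\rho(\G)=\rho(T)$ by reading spectral radii off of the averaged spectral measure of $(\G,o)$, which I would obtain as the weak limit of the empirical spectral measures of the $G_n$; the matching lower bound will come from the covering inequality. Let $\mu_n = |G_n|^{-1} \sum_i \delta_{\lm_i(G_n)}$ be the empirical spectral measure of $G_n$ and let $\mu$ denote the averaged spectral measure of $(\G,o)$, whose $k$-th moment is $\E{|W|_k(\G,o)}$.

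First I would establish $\mu_n \to \mu$ weakly. The $k$-th moment of $\mu_n$ equals $|G_n|^{-1} \mathrm{tr}(A_{G_n}^k) = |G_n|^{-1} \sum_v |W|_k(G_n, v)$. Since $|W|_k(G,v)$ depends only on $B_k(G,v)$ and is bounded by $\Delta^k$, where $\Delta$ is the common degree bound coming from $T$, local convergence gives that these moments tend to $\E{|W|_k(\G,o)}$, the $k$-th moment of $\mu$; because all the measures are supported in $[-\Delta,\Delta]$, this moment convergence upgrades to weak convergence. The weakly Ramanujan hypothesis \eqref{eqn:Ramanujan} reads $\mu_n([-\rho(T),\rho(T)]) \to 1$, and Portmanteau applied to the closed interval $I := [-\rho(T),\rho(T)]$ yields $\mu(I) \geq \limsup_n \mu_n(I) = 1$. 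Hence $\mathrm{supp}(\mu) \subseteq I$, and since $\rho(\G)$ equals the exponential growth rate of the even moments of $\mu$, which for any compactly supported probability measure coincides with $\max_{x \in \mathrm{supp}(\mu)} |x|$, we obtain $\rho(\G) \leq \rho(T)$.

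For the reverse inequality, the samplewise universal cover of $(\G,o)$ is almost surely isomorphic to $(T,\hat o)$ for some preimage $\hat o$ of $o$ under the cover map. By uniqueness of path lifting, the covering projection sends closed walks of length $2k$ from $\hat o$ in $T$ injectively to closed walks of length $2k$ from $o$ in $\G$, so $|W|_{2k}(T,\hat o) \leq |W|_{2k}(\G,o)$ almost surely. Since $T$ is a fixed connected graph, $|W|_{2k}(T,\hat o)^{1/2k} \to \rho(T)$; taking expectations and $2k$-th roots then gives $\rho(T) \leq \rho(\G)$.

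The main obstacle is the first step, where local convergence of rooted graphs must be translated into weak convergence of empirical spectral measures; this rests on the observation that closed-walk counts are bounded and local functions of the rooted graph, so the method of moments applies on the uniformly compact support. The Portmanteau and covering inputs used afterward are standard.
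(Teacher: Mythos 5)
Your proof is correct, and in substance it follows the same strategy as the paper: both halves reduce to comparing the moments $\E{|W|_{2k}(\G,o)}$ with $\rho(T)^{2k}$. For the upper bound the paper argues quantitatively, writing the normalized trace as $q_n\,\rho(T)^{2k}+(1-q_n)\,\Delta^{2k}$ with $q_n\to1$, whereas you package the same information as weak convergence of the empirical spectral measures to the averaged spectral measure plus Portmanteau applied to the closed interval $[-\rho(T),\rho(T)]$; the two are interchangeable, yours being softer while the paper's keeps the dependence on $k$ and $q_n$ explicit. For the lower bound the routes differ a little: the paper injects walks via the cover map at the level of each finite graph, obtaining $\frac{1}{|G_n|}\sum_{v}|W|_{2k}(G_n,v)\ge\sum_j p_j\,|W|_{2k}(T,\hat v_j)$ with the orbit proportions $p_j$, and only then passes to the limit, while you work directly in the limit object, using that the universal cover of $(\G,o)$ is almost surely $(T,\hat o)$. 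That identification is true but is the one assertion you leave unjustified; it holds because a radius-$r$ ball in the universal cover is determined by the radius-$r$ ball in the base graph, so being covered by $T$ survives local limits (the paper records exactly this, citing continuity of the universal cover in the local topology, in the reduction preceding the lemma). One further small point to spell out: from $|W|_{2k}(T,\hat o)^{1/2k}\to\rho(T)$ almost surely you cannot literally ``take expectations and $2k$-th roots''; instead use quasi-transitivity, namely $\E{|W|_{2k}(T,\hat o)}=\sum_j q_j\,|W|_{2k}(T,\hat v_j)$ is a finite convex combination over orbit representatives, so restricting to any orbit with $q_j>0$ gives $\left(\E{|W|_{2k}(T,\hat o)}\right)^{1/2k}\ge q_j^{1/2k}\,|W|_{2k}(T,\hat v_j)^{1/2k}\to\rho(T)$, which is precisely how the paper handles this step with the $p_j$'s. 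With those two lines added your argument is complete.
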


\begin{proof}
	Let $\Delta$ be the maximal vertex degree of the graph sequence.
	Due to local convergence and bounded convergence theorem, $\E{|W|_{2k}(\G,o)}$
	is the limit of $\frac{1}{|G_n|} \sum_{v \in G_n} |W|_{2k}(G_n,v)$.
	Let $q_n$ be the proportion of eigenvalues of $G_n$ that are at most $\rho(T)$ in absolute value, so then $q_n \to 1$.
	Note that all eigenvalues of $G_n$ are bounded by $\Delta$ in absolute value. The aforementioned average
	is the trace of the $(2k)$-th power of the adjacency matrix of $G_n$, normalized by $|G_n|$. Thus,
	$$ \frac{1}{|G_n|} \sum_{v \in G_n} |W|_{2k}(G_n,v)  \leq q_n \, \rho(T)^{2k} + (1-q_n) \, \Delta^{2k}.$$
	Upon taking limits we conclude that $\rho(\G) \leq \rho(T)$.

	For the inequality in the other direction, recall the vertices $\hat{v}_1, \ldots, \hat{v}_m$ from \eqref{eqn:orbit}
	and the associated probability measure $(p_1, \ldots, p_m)$ on them. Suppose a finite
	graph $G$ has universal cover $T$. Recall $p_j$ is the proportion of vertices in $G$ that have a
	pre-image in $T$, under the cover map, which can be sent to $\hat{v}_j$ by a $T$-automorphism.
	If $\hat{v} \in T$ is mapped to $v \in G$ by the cover map then $|W|_{2k}(G,v) \geq |W|_{2k}(T, \hat{v})$.
	Indeed, the cover map provides an injection from $W_{2k}(T,\hat{v})$ into $W_{2k}(G,v)$ due to its path lifting property.
	Consequently,
	$$ \frac{1}{|G|} \sum_{v \in G} |W|_{2k}(G,v)  \geq \sum_{j=1}^m p_j\, |W|_{2k}(T,\hat{v}_j).$$

	Applying the inequality above to every $G_n$ and taking the large $n$ limit gives
	$$ \E{|W|_{2k}(\G,o)} \geq \sum_{j=1}^m p_j \, |W|_{2k}(T,\hat{v}_j).$$
	Since $\rho(T)$ is the large $k$ limit of $|W|_{2k}(T, \hat{v}_j)^{1/2k}$ for every $\hat{v}_j$, the
	inequality above implies that $\rho(\G) \geq \rho(T)$.
\end{proof}

\section{A spectral rigidity theorem} \label{sec:infinite}

Theorem \ref{thm:1} will be proved by showing that if there is an $\ell$ such that
$$\pr{o\;\text{lies in an}\; \ell-\text{cycle of}\; \G} > 0,$$
then $\rho(\G)/\rho(T) \geq 1 + \delta$ for some positive $\delta$.
This result is built up in the subsequent sections by drawing a connection between
the spectral radius of $\G$ and of $T$ in terms of the norms of certain Markov operators
associated to random walks on the fundamental group of $\G$. This connection was
established in \cite{AGV}.

\subsection{Counting walks using the fundamental group}

Consider a connected graph $H$ which may be countably infinite and may have multi-edges
and loops around its vertices. (A loop contributes degree 2 to its vertex.)
Let $\pi(H,v)$ be its fundamental group based at vertex $v$, which consists of homotopy
classes of closed walks from $v$ under the operation of concatenation. It is a free group.
(See \cite{Ma} for an account on the fundamental group of graphs and its properties mentioned here.)

Let $W_k(u,v)$ be the set of walks in $H$ of length $k$ from $u$ to $v$. Note $W_k(v,u) = W_k^{-1}(u,v)$,
where the inverse means walking in the opposite direction. The set
$$ W W^{-1} = \{ P Q^{-1}: P, Q \in W_k(u,v) \}$$
consists of closed walks from $u$ of length $2k$ and is closed under inversion.
It naturally maps into $\pi(H,u)$, and the uniform measure on it pushes forward
to a measure on the image $\overline{WW^{-1}} \subset \pi(H,u)$. Note the push-forward may
not be uniform measure on the image as different closed walks in $WW^{-1}$ may be homotopy equivalent.

Consider the random walk on $\pi(H,u)$ whose step distribution is the aforementioned
push-forward measure of $WW^{-1}$. Since $WW^{-1}$ is closed under taking inverses,
this is a symmetric random walk on the Cayley graph of the subgroup of $\pi(H,u)$ generated by $WW^{-1}$.
Denote the norm of its associated Markov operator by
\begin{equation} \label{eqn:Markov} ||M_k||(u,v).\end{equation}

Now fix a vertex $o \in H$, a path $P$ from $o$ to $u$ and another path $Q$ from $o$ to $v$.
The set $ P W_k(u,v) Q^{-1}$ consists of closed walks from $o$. Consider the random walk on $\pi(H,o)$
whose step distribution is the push-forward of the uniform measure on this set by its the natural mapping into $\pi(H,o)$.
Denote by $\sqrt{||M_k||}(u,v)$ the norm of the Markov operator of this random walk. This operator may
not be symmetric since the set $P W_k(u,v)Q^{-1}$ is not closed under taking inverses. However,
$$\sqrt{||M_k||}(u,v)^2 = ||M_k||(u,v)$$
because the norm in question is the square root of the norm
of the Markov operator for the random walk on $\pi(H,o)$ associated to the set
$$ (PW_k(u,v)Q^{-1}) (PW_k(u,v)Q^{-1})^{-1} = P WW^{-1} P^{-1}.$$
The Markov operator for $P WW^{-1} P^{-1}$ is isomorphic -- as an operator on $\ell^2(\pi(H,o))$ --
to the Markov operator for $WW^{-1}$ on $\ell^2(\pi(H,u))$. The isomorphism comes from the natural
isomorphism of groups $\pi(H,o) \leftrightarrow \pi(H,u)$. The norm of the Markov operator
for $WW^{-1}$ is $||M_k||(u,v)$.

\subsection{The counting argument}
Let $H$ be a graph as above. A \emph{purely backtracking} walk in $H$ is a closed walk that is
homotopic to the empty walk, that is, it reduces to the identity in the fundamental group of $H$.
Purely backtracking walks in $H$ from a base point $o$ are in one to one correspondence with closed
walks in the universal cover of $H$ from a base point $\hat{o}$ such that $\pi(\hat{o}) = o$ ($\pi$
being the cover map). This is due to the path lifting property of the universal cover map.

Choose an arbitrary vertex $o \in H$. Let $n$ and $k$ be arbitrary integers with $nk$ being even.
Denote by $W$ all closed walks from $o$ of length $nk$. Denote by $N$ all purely
backtracking walks from $o$ of length $nk$. The following inequality is from \cite{AGV}.
\begin{equation} \label{eqn:main}
\log |W| - \log |N| \geq \frac{1}{|N|} \sum_{P \in N} \, \sum_{j=1}^{n} -\frac{1}{2}\, \log ||M_k||(P_{(j-1)k},P_{jk})\,.
\end{equation}

The proof is based on partitioning the set $W$ in the following way.
Two walks in $W$ are equivalent if their locations coincide at the times
$0, k, 2k, \ldots, nk$. Let $W_N$ denote the set of walks in $W$ that are
equivalent to some purely backtracking walk. Observe that
$$|W_N| = \sum_{P \in N} \frac{|[P]|}{|[P]\cap N|}\,.$$

The term $|[P]| / |[P] \cap N|$ is the reciprocal of the probability that
a uniform random walk in $[P]$ is purely backtracking. The probability
can be interpreted in the following way. Consider the random walk
on $\pi(H,o)$ whose step distribution is the push forward of the uniform
measure on $[P] \mapsto \pi(H,o)$. The probability under consideration
is the one-step return probability of this random walk. It may be expressed
as $\langle M_P \rm{id},\rm{id} \rangle$, where $M_P$ is the Markov operator
of this random walk. Therefore,
$$|W| \geq \sum_{P \in N}\, \langle M_P \rm{id}, \rm{id} \rangle^{-1} \,.$$

Every $Q \in [P]$ agrees with $P$ at the times $0, k, \ldots, nk$.
This allows us to decompose $Q$ into petals as in Figure \ref{fig:1}.
\begin{figure}[hbtp]
	\centering
	\includegraphics[scale=0.5]{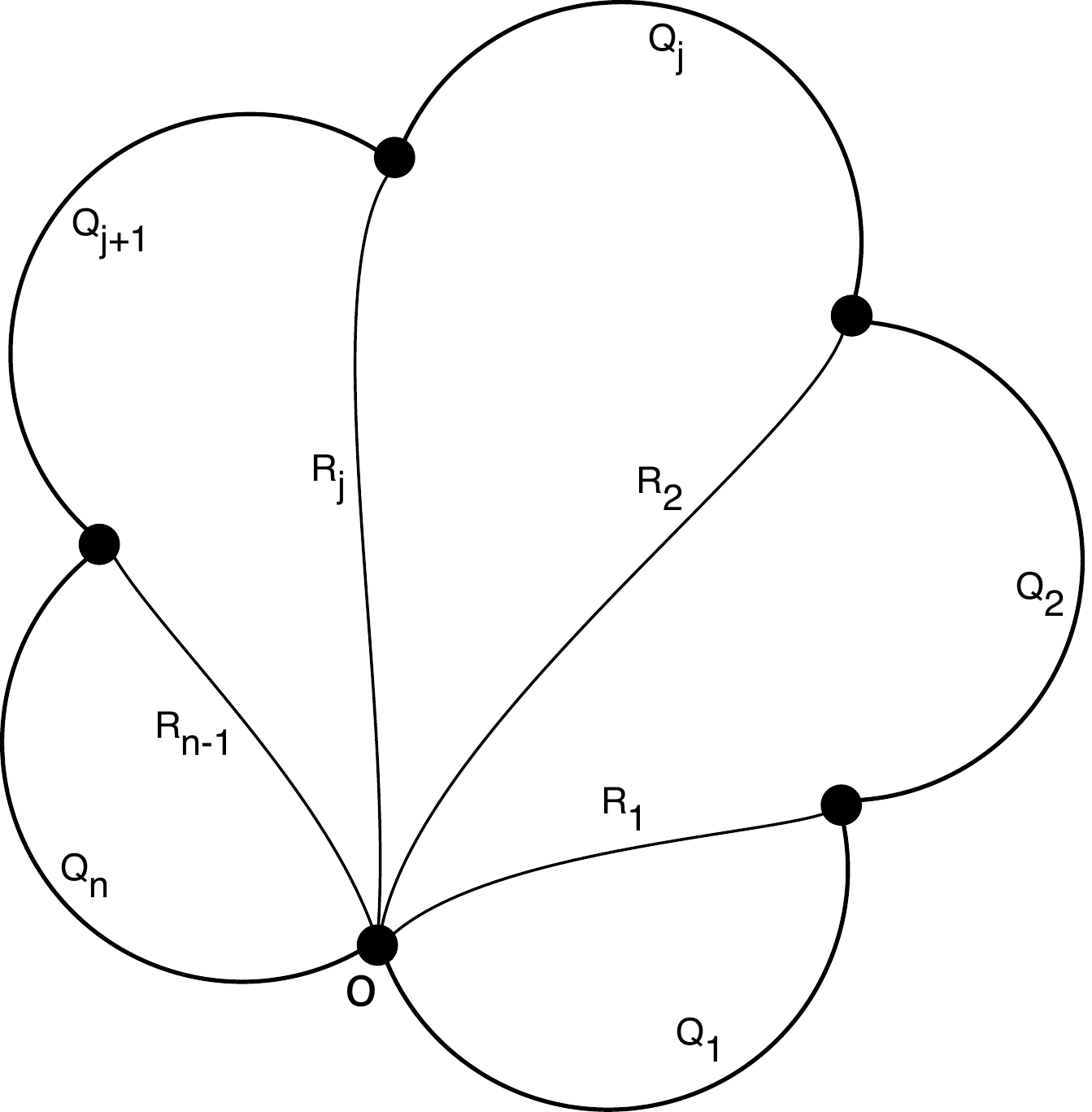}
	\caption{Decomposing a closed walk into petals.}
	\label{fig:1}
\end{figure}

Here, $Q_j$ is the segment of $Q$ from $Q_{(j-1)k} = P_{(j-1)k}$ to $Q_{jk} = P_{jk}$.
$R_j$ is a fixed path from $o$ to $P_{jk}$ chosen independently of $Q$.
The decomposition is that
$$Q = \underbrace{{(Q_1 R_1^{-1})}}_{T_1} \cdot \underbrace{(R_1 Q_2 R_2^{-1})}_{T_2} \cdots \underbrace{(R_{n-1} Q_n)}_{T_n}\,.$$
Under this decomposition, a uniformly random element $Q \in [P]$ becomes the product $T_1 \cdots T_n$,
where $T_j$ is a uniformly random element of $R_{j-1} W_k\big(P_{(j-1)k}, P_{jk}\big) R_j^{-1}$.
This uses that the locations of $Q$ are pinned at the times $0, k, \ldots, nk$.

Let $M_j$ be the Markov operator for the random walk on $\pi(H,o)$ with step distribution $T_j$.
Then $M_P = M_1 \cdots M_n$, and
$$ \langle M_P \mathrm{id}, \mathrm{id} \rangle \leq || M_P || \leq \prod_j ||M_j||\,.$$
Each $||M_j||$ equals $\sqrt{||M_k||}(P_{(j-1)k},P_{jk})$. Therefore,
$$|W| \geq \sum_{P \in N} \, \prod_{j=1}^n ||M_k||(P_{(j-1)k}, P_{jk})^{-1/2}\,.$$
Dividing the above by $|N|$, using the inequality of arithmetic-mean and geometric-mean, and then
taking the logarithm gives the inequality from \eqref{eqn:main}.

Let $(\G,o)$ be an infinite sofic graph as in the statement of Theorem \ref{thm:1}.
The mass transport principle simplifies the right hand side of \eqref{eqn:main} for
$(\G,o)$ as follows.

\begin{lemma} \label{lem:2}
In this setting the following equation holds for $j = 1, \ldots, n$.
$$\E{ \frac{1}{|N|_{nk}(\G,o)} \, \sum_{P \in N_{nk}(\G,o)}\, \log ||M_k||(P_{(j-1)k},P_{jk})} =
\E{ \sum_{P \in N_{nk}(\G,o)}\, \frac{\log ||M_k||(o,P_k)}{|N_{nk}|(\G,P_{(n-j+1)k})}}\,.$$
\end{lemma}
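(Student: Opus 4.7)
The plan is to exhibit a doubly-rooted function whose transport identity under the mass transport principle is precisely the claimed equality. Define, for a doubly-rooted graph $(G,u,v)$,
$$F(G,u,v) := \frac{1}{|N_{nk}|(G,u)} \sum_{\substack{P \in N_{nk}(G,u) \\ P_{(j-1)k} = v}} \log ||M_k||(v, P_{jk}).$$
This depends only on the double-rooted isomorphism class of $(G,u,v)$, since the set of purely backtracking walks, the intermediate positions $P_{(j-1)k}, P_{jk}$, and the operator norm $||M_k||$ are all intrinsic invariants of the graph. Summing over $v \in G$, the constraint $P_{(j-1)k} = v$ is met by exactly one vertex per $P$, so
$$\sum_{v \in G} F(G,o,v) = \frac{1}{|N_{nk}|(G,o)} \sum_{P \in N_{nk}(G,o)} \log ||M_k||(P_{(j-1)k}, P_{jk}),$$
which is exactly the summand inside the expectation on the left-hand side of the lemma.

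The key geometric input for the reverse direction is that cyclic shifts preserve the purely backtracking property: a closed walk is purely backtracking iff it lifts to a closed walk in the universal cover, and reindexing a closed walk in a tree from a new starting index yields another closed walk, which descends to a purely backtracking walk based at the new basepoint. Applied to $P \in N_{nk}(G,v)$ with $P_{(j-1)k} = o$, the cyclic shift $\tilde P_i := P_{(j-1)k + i \bmod nk}$ lies in $N_{nk}(G,o)$ and satisfies $\tilde P_k = P_{jk}$ together with $\tilde P_{(n-j+1)k} = P_{nk} = v$, and $P \leftrightarrow \tilde P$ is a bijection. Reindexing $\sum_v F(G,v,o)$ through this bijection gives
$$\sum_{v \in G} F(G,v,o) = \sum_{\tilde P \in N_{nk}(G,o)} \frac{\log ||M_k||(o, \tilde P_k)}{|N_{nk}|(G, \tilde P_{(n-j+1)k})},$$
which matches the summand inside the expectation on the right-hand side of the lemma.

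What remains is to check the hypotheses of the mass transport principle for the sofic graph $(\G,o)$, namely that $F$ is measurable and bounded. Measurability is immediate. For boundedness, $||M_k||(u,v) \leq 1$ because $M_k$ is a Markov operator, while $||M_k||(u,v) \geq \langle M_k \mathrm{id}, \mathrm{id}\rangle \geq 1/|W_k(u,v)| \geq \Delta^{-k}$, where $\Delta$ is the maximum degree in the common cover $T$; hence $|\log ||M_k|||$ is bounded by $k \log \Delta$, and since only one $v$ contributes per $P$ the total $|\sum_v F(G,o,v)|$ is also bounded by $k \log \Delta$ uniformly in $G$. Applying the mass transport principle to $F$ and invoking the two identifications above yields the lemma. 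The only non-routine step is the cyclic-shift bijection, and its validity rests entirely on the characterization of purely backtracking walks via the universal cover; the rest is bookkeeping with the intrinsic nature of the fundamental-group constructions.
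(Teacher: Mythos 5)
Your proof is correct and follows essentially the same route as the paper: you define the same mass-transport function (with the two roots swapped, which is immaterial), compute its two marginal sums, and apply the mass transport principle. The cyclic-shift bijection you spell out is exactly what the paper compresses into ``we can also sum over the walks by starting them at $u$ instead of $v$,'' and your boundedness check via $\Delta^{-k} \leq ||M_k||(u,v) \leq 1$ is a welcome detail the paper leaves implicit.
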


\begin{proof}
Consider the function
$$F(H,u,v) = \frac{1}{|N|_{nk}(H,v)} \sum_{P \in N_{nk}(H,v)} \ind{P_{(j-1)k} = u} \log ||M_k||(u, P_{jk})\,.$$
It depends on the doubly-rooted isomorphism class of $(H,u,v)$. Now,
$$\sum_{u \in H} F(H,u,v) = \frac{1}{|N|_{nk}(H,v)}\sum_{P \in N_{nk}(H,v)} \log ||M_k||(P_{(j-1)k}, P_{jk})\,.$$
On the other hand,
$$F(H,u,v) = \frac{1}{|N|_{nk}(H,v)} \sum_{P \in N_{nk}(H,u)} \ind{P_{(n-j+1)k} = v} \log ||M_k||(u, P_k)$$
because we can also sum over the walks by starting them at $u$ instead of $v$. Therefore,
$$\sum_{v \in H} F(H,u,v) = \sum_{P \in N_{nk}(H,u)}\, \frac{\log ||M_k||(u,P_k)}{|N|_{nk}(H,P_{(n-j+1)k})}\,.$$
The mass-transport principle for $(\G,o)$ states that
$$\E {\sum_{u \in \G}\, F(\G,u,o)} = \E { \sum_{v \in \G}\, F(\G,o,v)},$$
which is the equation in the statement of the lemma.
\end{proof}

\subsection{Bounds}

Applying the bound from \eqref{eqn:main} to $(\G,o)$, taking the expectation value,
applying Lemma \ref{lem:2} and then dividing by $nk$ gives
\begin{align*}
& \frac{\E{\log |W|_{nk}(\G,o)} - \E{\log|N|_{nk}(\G,o)}}{nk} \;\; \geq \\
& \qquad \qquad \E{\sum_{P \in N_{nk}(\G,o)} \frac{1}{n} \sum_{j=1}^n \frac{-(2k)^{-1}\log ||M_k||(o,P_k)}{|N|_{nk}(\G,P_{(n-j+1)k})}}.
\end{align*}

The term $-(2k)^{-1} \log ||M_k||(P_o,P_k)$ is non-negative. We would thus like to replace
each of the terms $|N|_{nk}(\G,P_{(n-j+1)k})$ by $|N|_{nk}(\G,o)$, after which the average over the parameter $j$
would be replaced by unity. Recall the universal cover of $\G$ is the non-random tree $T$ and
$W_{nk}(T,\hat{v}) = N_{nk}(\G,\pi(\hat{v}))$. Therefore, the cost of replacing $|N|_{nk}(\G,P_{(n-j+1)k})$
by $|N|_{nk}(\G,o)$ while preserving the $\geq$ inequality is given by the multiplicative factor
$$r_{nk} = \min_{i,j} \, \frac{|W_{nk}|(T, \hat{v}_i)}{|W_{nk}|(T, \hat{v}_j)}\,,$$
where $\hat{v}_1, \ldots, \hat{v}_m$ are a set of orbit representatives for $T$ as given by \eqref{eqn:orbit}.
Part 1 of the following lemma shows that $r_{nk} \geq \Delta^{-2d}$, where $d$ is the maximum
distance between any two of the $\hat{v}_i$'s and $\Delta$ is the maximal degree of $T$.

\begin{lemma}\label{lem:3}
Let $H$ be a connected graph having maximum degree at most $\Delta$.
Let $x$ and $y$ be two of its vertices having distance $d$ between them.
\begin{enumerate}
	\item $|W|_{2k}(H,y) \leq \Delta^{2d}\, |W|_{2k}(H,x)$.
	\item $|W|_{2k+2j}(H,x) \leq \Delta^{2j} \, |W|_{2k}(H,x)$.
\end{enumerate}
\end{lemma}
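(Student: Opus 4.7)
The plan is to prove part (2) first via the operator-norm bound on the adjacency matrix, and then use it to deduce part (1) by prepending a geodesic. Throughout, I view the adjacency matrix $A$ of $H$ as a bounded self-adjoint operator on $\ell^2(H)$, and I use the standard identity
$$|W|_{2k}(H,v) \;=\; (A^{2k})_{vv} \;=\; \| A^k e_v \|_{\ell^2}^{2},$$
where $e_v$ is the indicator of the vertex $v$. Since every row of $A$ has entries summing to the degree of the corresponding vertex, and this is at most $\Delta$, one has $\|A\|_{\ell^2 \to \ell^2} \le \Delta$ (this is the standard max-degree bound on the spectral radius of the adjacency operator, valid even for infinite graphs with multi-edges and loops).

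For part (2), write
$$|W|_{2k+2j}(H,x) \;=\; \|A^{k+j} e_x\|^2 \;=\; \|A^j \, (A^k e_x)\|^2 \;\le\; \|A\|^{2j}\, \|A^k e_x\|^2 \;\le\; \Delta^{2j}\, |W|_{2k}(H,x).$$
This is the entirety of the argument for (2); no combinatorics is required beyond interpreting closed walks as diagonal entries of $A^{2k}$.

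For part (1), fix a shortest path $\gamma$ from $x$ to $y$ of length $d$; in particular $(A^d)_{xy} \ge 1$. For every vertex $u \in H$,
$$(A^{k+d})_{xu} \;=\; \sum_{v \in H} (A^d)_{xv} (A^k)_{vu} \;\ge\; (A^d)_{xy} (A^k)_{yu} \;\ge\; (A^k)_{yu},$$
which expresses combinatorially the fact that one can inject walks of length $k$ from $y$ to $u$ into walks of length $k+d$ from $x$ to $u$ by prepending $\gamma$. Squaring and summing over $u$, and invoking self-adjointness again,
$$|W|_{2k}(H,y) \;=\; \sum_u (A^k)_{yu}^2 \;\le\; \sum_u (A^{k+d})_{xu}^2 \;=\; |W|_{2k+2d}(H,x).$$
Applying part (2) with $j=d$ to the right-hand side yields $|W|_{2k}(H,y) \le \Delta^{2d}\, |W|_{2k}(H,x)$.

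There is no real obstacle here; the only mild subtlety is that $H$ is allowed to be infinite and to have multi-edges and loops, so one must set up the adjacency operator $A$ on $\ell^2(H)$ with the correct conventions (loops and multi-edges entering the entries of $A$ with their combinatorial multiplicities), and then invoke the standard $\|A\| \le \Delta$ bound. Once this is in place, the whole proof is essentially the two displays above.
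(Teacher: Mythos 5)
Your proof is correct and follows essentially the same route as the paper: part (2) via $\lVert A\rVert \le \Delta$ and the identity $|W|_{2k+2j}(H,x)=\lVert A^{j}(A^{k}\delta_x)\rVert^2$, and part (1) reduced to part (2) through the inequality $|W|_{2k}(H,y)\le |W|_{2k+2d}(H,x)$. The only difference is that you spell out that last inequality with the matrix-entry estimate $(A^{k+d})_{xu}\ge (A^{k})_{yu}$ (equivalently, prepending a geodesic), whereas the paper simply states it as an observation.
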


\begin{proof}
	Let $A$ be the adjacency matrix of $H$ acting on $\ell^2(H)$ ($H$ may be countably infinite).

	The inequality in (1) follows from the inequality in (2) upon observing that
	$|W|_{2k}(H,y) \leq |W|_{2k + 2d}(H,x)$. For the proof of (2), we have
	$|W|_{2k + 2j}(H,x) = \langle A^{2k + 2j} \delta_x, \delta_x \rangle$
	and the latter equals $\langle A^{2j} (A^{k}\delta_x), (A^{k} \delta_x) \rangle$.
	Thus,
	\begin{equation*}
	|W|_{2k + 2j}(H,x)  \leq ||A^{2j}|| \, \langle A^{k}\delta_x, A^{k} \delta_x \rangle
	\leq \Delta^{2j} \langle A^{k}\delta_x, A^{k} \delta_x \rangle = \Delta^{2j} |W|_{2k}(H,x)\,.
	\end{equation*}
\end{proof}

\begin{lemma} \label{lem:4}
	Let $\Delta$ be the maximal degree of $T$. The following inequality holds:
	\begin{equation*}
	\log \rho(\G) - \log \rho(T) \geq \; \sup_{k \geq 1} \; \frac{\E{- \log ||M_{2k}||(o,o)}}{4k \, \Delta^{2d+2k}}\,.
	\end{equation*}
\end{lemma}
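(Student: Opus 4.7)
The plan is to specialize the counting inequality \eqref{eqn:main} to parameter $2k$ in place of $k$, so that the Markov-operator factors become $||M_{2k}||$ and walks have length $2nk$. Taking expectations, applying Lemma \ref{lem:2} to put each step-factor into the form $||M_{2k}||(o, P_{2k})$, and dividing by $2nk$, I would then use Lemma \ref{lem:3}(1) to replace each denominator $|N|_{2nk}(\G, P_{(n-j+1)\cdot 2k})$ by $|N|_{2nk}(\G, o)$ at multiplicative cost $\Delta^{2d}$; this is available because $|N|_{2nk}(\G, v) = |W|_{2nk}(T, \hat v)$ depends only on the orbit of $\hat v$ in $T$, and any two orbit representatives lie within distance $d$. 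The summand then becomes independent of $j$ and the $\frac{1}{n}\sum_{j=1}^n$ collapses, yielding
\begin{equation*}
\frac{\E{\log|W|_{2nk}(\G,o)} - \E{\log|N|_{2nk}(\G,o)}}{2nk} \;\geq\; \frac{\Delta^{-2d}}{4k}\,\E{\frac{1}{|N|_{2nk}(\G,o)}\sum_{P \in N_{2nk}(\G,o)}\bigl(-\log||M_{2k}||(o, P_{2k})\bigr)}.
\end{equation*}

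Next I would isolate the diagonal term $||M_{2k}||(o,o)$ on the right. Since symmetric random walks on groups have Markov operators of norm at most $1$, every summand above is non-negative, so restricting the sum to those $P$ with $P_{2k} = o$ preserves the inequality and converts the step-factor into $||M_{2k}||(o,o)$. Via the path-lifting bijection $N_{2nk}(\G,o) \leftrightarrow W_{2nk}(T, \hat o)$, such walks correspond to closed $T$-walks $\hat P$ from $\hat o$ with $\pi(\hat P_{2k}) = o$; further restricting to the sub-condition $\hat P_{2k} = \hat o$ lets them be enumerated as a concatenation of a closed $T$-walk of length $2k$ with a closed $T$-walk of length $2(n-1)k$, both based at $\hat o$. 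Combined with Lemma \ref{lem:3}(2) in $T$, namely $|W|_{2nk}(T, \hat o) \leq \Delta^{2k}\,|W|_{2(n-1)k}(T, \hat o)$, and the trivial bound $|W|_{2k}(T, \hat o) \geq 1$ (available since $\G$ is a.s.\ infinite so $\hat o$ has a neighbour in $T$), the restricted ratio is at least $\Delta^{-2k}$, and one obtains
\begin{equation*}
\frac{\E{\log|W|_{2nk}(\G,o)} - \E{\log|N|_{2nk}(\G,o)}}{2nk} \;\geq\; \frac{\E{-\log||M_{2k}||(o,o)}}{4k \, \Delta^{2d+2k}}.
\end{equation*}

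Finally I would pass to the limit $n \to \infty$ on the left-hand side. By Jensen's inequality, $\E{\log|W|_{2nk}(\G,o)}/(2nk) \leq \log\E{|W|_{2nk}(\G,o)}/(2nk) \to \log\rho(\G)$ by the definition of the spectral radius of a sofic graph. For the $|N|$-term, the identity $|N|_{2nk}(\G,o) = |W|_{2nk}(T, \hat o)$ with $\hat o$ ranging over the finite set $\{\hat v_1, \ldots, \hat v_m\}$ of orbit representatives, together with $|W|_{2nk}(T, \hat v_j)^{1/(2nk)} \to \rho(T)$ for each $j$, gives $\E{\log|N|_{2nk}(\G,o)}/(2nk) \to \log\rho(T)$. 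Hence the $\limsup$ as $n \to \infty$ of the left-hand side is at most $\log\rho(\G) - \log\rho(T)$; since the right-hand side of the preceding display is independent of $n$, the inequality $\log\rho(\G) - \log\rho(T) \geq \E{-\log||M_{2k}||(o,o)}/(4k\,\Delta^{2d+2k})$ holds for every $k \geq 1$, and taking the supremum over $k$ completes the argument. The principal obstacle I anticipate is the combinatorial restriction in the second step: identifying that the right choice is $P_{2k} = o$ (not, say, $P_k = o$) and that the resulting tree-walk ratio produces a clean $\Delta^{-2k}$ factor via Lemma \ref{lem:3}(2) — this is what forces the substitution $k \mapsto 2k$ at the outset, after which everything reduces to mechanical application of the tools already in hand.
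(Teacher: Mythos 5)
Your proposal is correct and follows essentially the same route as the paper's proof: the same chain of \eqref{eqn:main}, Lemma \ref{lem:2}, and Lemma \ref{lem:3}(1) giving the $\Delta^{-2d}$ factor, then restricting to walks returning to $o$ at time $2k$ (the paper phrases this as the bound $\pr{P^n_{k}=o}\geq \Delta^{-k}$ for even $k$ via Lemma \ref{lem:3}(2) and $|N|_k\geq 1$, which is exactly your concatenation argument with $k$ replaced by $2k$), and finally the same Jensen/orbit-representative limit as $n\to\infty$. The only difference is cosmetic: you substitute $2k$ at the outset rather than working with a general even $k$ and specializing at the end.
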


\begin{proof}
	Observe that $\G$ has maximal degree $\Delta$ almost surely because it is covered by $T$.
	By Lemma \ref{lem:3},
	\begin{align} \label{eqn:bound}
	& \frac{\E{\log |W|_{nk}(\G,o)} - \E{\log|N|_{nk}(\G,o)}}{nk} \;\; \geq \\ \nonumber
	& \qquad \qquad \Delta^{-2d} \, \E{\frac{1}{|N|_{nk}(\G,o)}\, \sum_{P \in N_{nk}(\G,o)} \frac{- \log ||M_k||(o,P_k)}{2k}}.
	\end{align}

	The expectation on the right hand side of \eqref{eqn:bound} is an average over $(\G,o,P^n)$,
	where $P^n$ is a uniformly random purely backtracking walk in $\G$ starting at $o$ and having length $nk$.
	If $k$ is even then $\pr{P^{n}_k = o} \geq \Delta^{-k}$.
	This is because a purely backtracking walk from $o$ of length $nk$ will be at $o$ at step $k$ if it is
	a purely backtracking walk from $o$ of length $k$ followed by one of length $nk - k$. Consequently,
	$$\pr{P^n_k = o} \geq \E{\frac{|N|_k(\G,o) \cdot |N|_{nk-k}(\G,o)}{|N|_{nk}(\G,o)}} \geq \Delta^{-k},$$
	where the last inequality is due to $|N|_k(\G,o) \geq 1$ and, also, by part 2 of Lemma \ref{lem:3}, due to
	$|N|_{nk-k}(\G,o) \geq \Delta^{-k} |N|_{nk}(\G,o)$. Since $- \log ||M_k||(u,v)$ is non-negative,
	\eqref{eqn:bound} implies that for every even $k$,
	$$ \frac{\E{\log |W|_{nk}(\G,o)} - \E{\log|N|_{nk}(\G,o)}}{nk} \geq \frac{\E{- \log ||M_{k}||(o,o)}}{2k \, \Delta^{2d+k}} \,.$$

	We may take a large $n$ limit supremum of the left hand side of the above for every even value of $k$.
	In the limit as $n \to \infty$, the left hand side is at most $\log \rho(\G) - \log \rho(T)$.
	This is because $\E{\log |W|} \leq \log \E{|W|}$ by concavity of $\log$ and,
	as argued in Lemma \ref{lem:Ramanujan}, $\E{\log |N|_{nk}(\G,o)}$ is the average over a
	finitely supported probability measure (on at most $m$ points) and each term in this average
	converges to $\log \rho(T)$ after division by $nk$ and letting $n$ tend to infinity.
	The inequality from the lemma now follows due to $k$ being an arbitrary even integer.
\end{proof}

\subsection{Completion of the proof}

Let $(H,v)$ be a rooted and connected graph. Given integers $k$ and $\ell$,
let us say $H$ contains a bouquet if it has two disjoint $\ell$-cycles $C_1$ and $C_2$ such that
if the distance from $v$ to $C_j$ is $r_j$, then $k \geq \ell + \max\{r_1,r_2\}$.
The situation is pictured below.
\begin{figure}[htpb]
\begin{center}
	\includegraphics[scale=0.5]{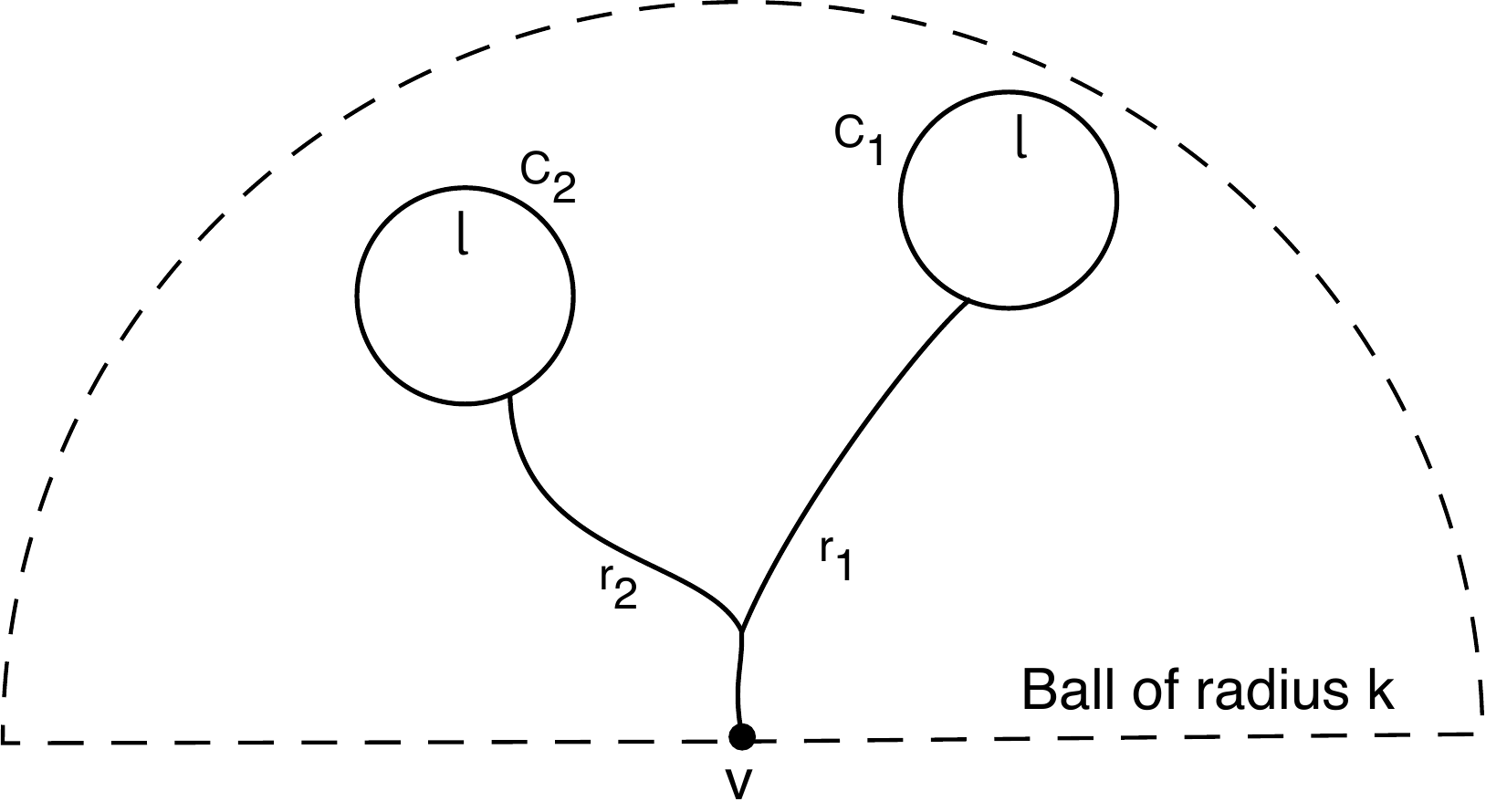}
	\caption{A bouquet around the root.}
	\label{fig:2}
\end{center}
\end{figure}

Suppose $(H,v)$ contains a bouquet for the parameter values $k$ and $\ell$.
They provide two closed walks in $W_{2k}(v,v)$, say $P_1$ and $P_2$, in the following way.
The walk $P_j$ is obtained by walking from $v$ to the closest vertex on $C_j$, traversing the cycle,
then walking back to $v$ along the reverse of the initial segment and appending some purely backtracking
walk at the end to ensure $2k$ steps in total.

Recall the walks in $W_{2k}(v,v)$ map to a set $\overline{W_{2k}(v,v)} \subset \pi (H,v)$ by homotopy equivalence.
The walks $P_1$ and $P_2$ then correspond to two mutually free elements in $\pi(H,v)$.
Let $\Gamma$ be the subgroup of $\pi(H,v)$ generated by $\overline{W_{2k}(v,v)}$.
It is a finitely generated free group of rank at least 2.
Recall that the uniform measure on $W_{2k}(v,v)$ pushes forward to a measure on $\overline{W_{2k}(v,v)}$,
which induces a symmetric random walk on $\Gamma$ whose Markov operator is denoted $M$.
The step distribution of the walk assigns positive probability to every element of $\overline{W_{2k}(v,v)}$.
So by Kesten's Theorem, specifically \cite[Corollary 3]{Ks}, the operator norm
\begin{equation} \label{eqn:Ks} || M || < 1. \end{equation}

As a result of \eqref{eqn:Ks}, the following lemma implies
\[\sup_{k \geq 1} \; \frac{\E{- \log ||M_{2k}||(o,o)}}{4k \, \Delta^{2d+2k}} > 0,\]
from which the proof of Theorem \ref{thm:1} follows by Lemma~\ref{lem:4}.

\begin{lemma}\label{lem:8}
	Let $(\G,o)$ be an infinite sofic graph such that for some $\ell$,
	\[\pr{o \;\text{lies in an}\; \ell-\text{cycle of} \; \G} > 0.\]
	Then there is a deterministic integer $k$ such that, with positive probability, $(\G,o)$ contains a bouquet with respect to
	the parameters $k$ and $\ell$.
\end{lemma}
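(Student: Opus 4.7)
The plan is to promote the local hypothesis ``with positive probability $o$ lies on an $\ell$-cycle'' into a global abundance statement for cycle-bearing vertices near $o$ via the mass transport principle, and then combinatorially extract two disjoint $\ell$-cycles to form the bouquet. Write $A(v) := \ind{v \text{ lies on an } \ell\text{-cycle of } \G}$, so the hypothesis reads $\E{A(o)} > 0$, and let $\Delta$ denote the common maximum degree of the graphs covered by $T$.

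The first step is to apply the mass transport principle to
\[F(G,u,v) := \ind{d(u,v) \leq R}\, A_G(v),\]
which depends only on the doubly-rooted isomorphism class of $(G,u,v)$, since distance and lying on an $\ell$-cycle are both isomorphism invariants. The MTP yields
\[\E{\sum_{v \in B_R(o)} A(v)} = \E{A(o) \cdot |B_R(o)|}.\]
Since $\G$ is almost surely infinite and connected, $|B_R(o)| \to \infty$ almost surely as $R \to \infty$. Combined with $\pr{A(o) = 1} > 0$, monotone convergence implies the right-hand side tends to $\infty$. Writing $S_R := \sum_{v \in B_R(o)} A(v)$, we thus have $\E{S_R} \to \infty$. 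Since $S_R$ is deterministically bounded by $|B_R(o)| \leq 1 + \Delta(\Delta-1)^{R-1}$ for each fixed $R$, having $\E{S_R}$ exceed any prescribed threshold $M$ forces $\pr{S_R > M} > 0$.

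With an abundance of cycle-bearing vertices in hand, a short combinatorial step produces two disjoint cycles near $o$. Each vertex of $\G$ lies on at most $\Delta^\ell$ distinct $\ell$-cycles, by the trivial closed-walk bound, and every $\ell$-cycle meeting $B_R(o)$ is contained in $B_{R+\ell}(o)$. Double counting vertices against cycles shows that the number of distinct $\ell$-cycles meeting $B_R(o)$ is at least $S_R/\ell$, since each cycle contributes at most $\ell$ vertices to $S_R$. Any fixed such cycle shares a vertex with at most $\ell \cdot \Delta^\ell$ other $\ell$-cycles, so once $S_R > \ell^2 \Delta^\ell$ one can first pick $C_1$ meeting $B_R(o)$ and then a vertex-disjoint $C_2$ from among the remaining cycles that meet $B_R(o)$. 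Each $C_j$ satisfies $d(o, C_j) \leq R$. Choosing $R$ so that $\E{S_R} > \ell^2 \Delta^\ell$ and setting $k := R + \ell$, both deterministic, the inequality $k \geq \ell + \max(r_1, r_2)$ holds, so $(\G,o)$ contains a bouquet with respect to $k$ and $\ell$ with positive probability.

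The main obstacle is the first step: the hypothesis only constrains the root, whereas the bouquet conclusion demands two cycles at distinct locations. Mass transport, available because $(\G,o)$ is sofic, converts the pointwise positivity $\pr{A(o) = 1} > 0$ together with the almost sure infinitude of $\G$ into an unbounded expected count of cycle-bearing vertices in sufficiently large balls about $o$. Once that accumulation is secured, the remainder is an elementary bounded-degree pigeonhole.
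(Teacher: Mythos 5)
Your proof is correct and takes essentially the same route as the paper's: a mass transport identity equating the expected number of cycle-bearing vertices in $B_R(\G,o)$ with $\E{A(o)\,|B_R(\G,o)|}$, followed by the bounded-degree pigeonhole (each vertex lies on at most $\Delta^{\ell}$ $\ell$-cycles, so an abundance of $\ell$-cycles near $o$ forces two disjoint ones) and the choice $k = R+\ell$. The only blemish is the stated ball bound $1+\Delta(\Delta-1)^{R-1}$, which should be a geometric sum, but that remark is immaterial since $\E{S_R} > M$ already forces $\pr{S_R > M} > 0$ without any boundedness.
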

\begin{proof}
	Let $N_R(v)$ be the number of $\ell$-cycles in a graph $H$ such that at least one of its vertices is within
	distance $R$ of vertex $v$. We will show below that for $(\G,o)$,
	$$\E{N_R(o)} \geq (R/ \ell)\, \pr{o \;\text{lies in an}\; \ell-\text{cycle of}\; \G}.$$
	Assuming this, we may choose an $R$ in terms of $\ell$ such that $\E{N_R(o)} \geq \ell \Delta^{\ell} + 2$.
	In this case, with positive probability, there are at least $\ell \Delta^{\ell} + 2$ different $\ell$-cycles
	in $\G$ within distance $R$ of the root $o$. Whenever this happens there must be two disjoint $\ell$-cycles
	within distance $R$ of the root, as we explain below. We may take $k$ to be $R + \ell$.

	The reason there are two disjoint $\ell$-cycles is that if $H$ has maximal degree
	$\Delta$, and $v$ is any vertex, then there can be at most $\Delta^{\ell}$ different $\ell$-cycles that
	pass through $v$. This means any specific $\ell$-cycle can meet at most $\ell \Delta^{\ell}$ other $\ell$-cycles.
	So when there are $\ell \Delta^{\ell} + 2$ different $\ell$-cycles, some two among them are disjoint.

	In order to get the lower estimate on $\E{N_R(o)}$ consider the function
	$$ F(H,u,v) = \mathbf{1}\left \{ \mathrm{dist}(u,v) \leq R\; \text{and}\; u\; \text{lies in an}\; \ell-\text{cycle of}\; H \right\}.$$
	Then,
	$$ \sum_{u \in H} F(H,u,v) =  \# \, \{\text{vertices in}\; \ell-\text{cycles of}\; H
	\; \text{within distance}\; R\; \text{of}\; v\} \leq \ell N_R(v),$$
	and
	$$ \sum_{v \in H} F(H,u,v) = |B_R(H,u)| \, \mathbf{1} \{ u \; \text{lies in an}\; \ell-\text{cycle of}\; H\}.$$
	Since $(\G,o)$ is infinite almost surely, $|B_R(\G,o)| \geq R$. The mass transport principle then provides
	the lower bound on $\E{N_R(o)}$ as displayed above.
\end{proof}

\section{A spectral gap theorem for finite graphs} \label{sec:finite}

In this section we prove Theorem \ref{thm:2}. Let $G$ be a finite and connected
graph with universal cover $T$ and cover map $\pi$. Since $\lm_1(G)$ is also the largest
eigenvalue of $G$ in absolute value, we denote it $\rho(G)$ henceforth.

For a graph $H$ and $x \in \ell^2(H)$, let
\begin{equation} \label{eqn:f}
f_H(x) = 2 \sum_{\{u,v\}\in H} x_ux_v\, ,
\end{equation}
where the summation is over the edges of $H$ counted with multiplicity as there may
be multi-edges and loops (recall a loop contributes degree 2 to its vertex). Thus,
\begin{equation*}
\rho(T) = \sup_{\substack{x \in \ell^2(T) \\ ||x|| = 1}} |f_T(x)|
\quad
\text{and}
\quad
\rho(G) = \sup_{\substack{x \in \ell^2(G) \\ ||x|| = 1}} |f_G(x)|.
\end{equation*}

Theorem \ref{thm:2} follows from the Propositions \ref{prop:unicyclic} and \ref{prop:multicyclic} given below.

\subsection{Spectral radius of an unicyclic graph} \label{sec:finite-A}

\begin{prop} \label{prop:unicyclic}
	Let $G$ be a finite and connected graph with at most one cycle. Then $\rho(G) = \rho(T)$.
\end{prop}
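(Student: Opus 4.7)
The direction $\rho(T) \leq \rho(G)$ is free of any cycle assumption: the cover map $\pi \colon T \to G$, being a local bijection, sends closed walks in $T$ based at $\hat{v}$ injectively to closed walks in $G$ based at $\pi(\hat{v})$, so $|W|_{2k}(T,\hat{v}) \leq |W|_{2k}(G,\pi(\hat{v}))$ and hence $\rho(T) \leq \rho(G)$ after taking $(2k)$-th roots and letting $k \to \infty$. The content of the proposition is the reverse inequality $\rho(G) \leq \rho(T)$, which I would prove via the Rayleigh variational principle on $T$.

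If $G$ is a tree the statement is trivial since $T = G$. Otherwise let $C$ be the unique cycle of $G$, of length $\ell$, and let $\mathcal{G}_0, \ldots, \mathcal{G}_{\ell-1}$ be the rooted pendant subtrees obtained as the components of $G$ minus the edges of $C$, each rooted at its cycle vertex. Because every $\mathcal{G}_i$ is already a tree it lifts isomorphically in the universal cover, so $T$ admits the following explicit description: a bi-infinite trunk path $\Z$, obtained by unfolding $C$, with a copy of $\mathcal{G}_{i \bmod \ell}$ attached at trunk vertex $i$. The key structural consequence is that $T$ has \emph{linear volume growth}, $|B_r(T,\hat{o})| = O(r)$, and that the edge boundary $|\partial B_r(T,\hat{o})|$ is bounded by a constant independent of $r$ once $r$ exceeds $\max_i \mathrm{diam}(\mathcal{G}_i)$ (only the two trunk ends and a bounded number of straddling edges inside pendant subtrees contribute).

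With this amenability-type estimate in hand, I apply Perron--Frobenius to obtain a strictly positive $x \in \ell^2(G)$ with $A_G x = \rho(G)\, x$, and lift it to $y := x \circ \pi$ on $T$. Since $\pi$ is a local bijection, $A_T y = \rho(G)\, y$ holds pointwise on $T$, and $y$ takes values in $[y_{\min}, y_{\max}]$ with $y_{\min} := \min_G x > 0$. Truncating to $y_r := y \cdot \mathbf{1}_{B_r(T,\hat{o})} \in \ell^2(T)$ and expanding the quadratic form using $A_T y = \rho(G)\,y$ pointwise gives
\[
\langle A_T y_r, y_r\rangle \;=\; \rho(G)\, ||y_r||^2 \;-\; E_\partial(r),
\]
where $E_\partial(r) := \sum_{u \in B_r,\, v \notin B_r,\, v \sim u} y(u)\, y(v)$ is bounded by $y_{\max}^2 \, |\partial B_r(T,\hat{o})| = O(1)$, while $||y_r||^2 \geq y_{\min}^2\, |B_r(T,\hat{o})| \to \infty$. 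Hence the Rayleigh quotient $\langle A_T y_r, y_r\rangle / ||y_r||^2$ converges to $\rho(G)$ as $r \to \infty$, and being bounded above by the operator norm $||A_T|| = \rho(T)$, this forces $\rho(T) \geq \rho(G)$.

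The main obstacle is the structural claim that in a unicyclic graph only the cycle unfolds in the universal cover, giving $T$ its trunk-plus-finite-pendants form and, in particular, its linear (amenable) volume growth; once this is granted, the remainder is a routine Folner-type truncation of the lifted Perron eigenfunction, and no delicate harmonic analysis on $T$ is needed.
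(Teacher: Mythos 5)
Your proposal is correct and takes essentially the same route as the paper: both establish $\rho(T)\ge\rho(G)$ via the Rayleigh variational principle, using the explicit trunk-plus-pendant (unrolled) description of $T$ and a truncated lift of the Perron eigenvector of $G$, with the reverse inequality $\rho(T)\le\rho(G)$ being the standard covering/walk-counting fact. The only cosmetic difference is the choice of truncation: the paper supports its test function on $N$ whole copies of the cut graph $H$, so the error is exactly the single term $\tfrac{2}{N}y_{v_1}y_{v_n}$, whereas you cut along a ball $B_r(T,\hat o)$ and bound the $O(1)$ boundary edges via the linear growth of $T$.
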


\begin{proof}
	There is nothing to prove if $G$ is a tree, so assume that $G$ has exactly one cycle
	(possibly a loop, or a 2-cycle made by a pair of multi-edges).
	We give an explicit description of $T$ in terms of $G$.

	Let the unique cycle in $G$ consist of vertices $v_1,\dots,v_n$, in that order.
	Let $H$ be the graph obtained by deleting edge $(v_n,v_1)$ from $G$.
	We construct countably infinite copies $\dots, H_{-1}, H_0, H_1, \dots$ of $H$, indexed by $\Z$.
	For each $k\in \Z$, we draw an edge between $v_n$ in $H_k$ and $v_1$ in $H_{k+1}$.
	The resulting graph is $T$.

	Let $y \in \ell^2(G)$ be the maximal eigenvector of $G$, normalized to $||y||=1$ and with positive entries.
	Thus, $f_G(y)=\rho(G)$. We will construct an $x\in \ell^2(T)$, with $||x|| = 1$, such that $f_T(x)$
	approximates $\rho(G)$ arbitrarily closely.

	Fix an arbitrary $N\in \N$.
	For $v'\in H_1,\dots,H_N$, set $x_{v'} = \frac{1}{\sqrt{N}} y_{\pi(v')}$.
	For all other $v'\in T$, set $x_{v'} = 0$.

	It is evident that $||x|| = ||y|| = 1$.  Moreover,
	\[f_T(x) = 2\sum_{\substack{(u',v')\in T \\ u',v'\in H_1\cup \dots \cup H_N} } \frac{1}{N} y_{\pi(u')}y_{\pi(v')}.\]
	For each edge $(u,v)\in G$ the term $\frac{1}{N} y_{u}y_{v}$ appears $N$ times in the above sum,
	except for $\frac{1}{N} y_{v_1}y_{v_n}$, which appears $N-1$ times. Therefore,
	\[f_T(x) = 2\sum_{(u,v)\in G} y_uy_v - \frac{2}{N} y_{v_1}y_{v_n} = \rho(G) - \frac{2}{N}y_{v_1}y_{v_n}.\]
	As $N$ was arbitrary, the error term $\frac{2}{N}y_{v_1}y_{v_n}$ can be made arbitrarily small.
\end{proof}

\subsection{Spectral gap for a multi-cyclic graph} \label{sec:finite-B}

\begin{prop} \label{prop:multicyclic}
	Let $G$ be a finite and connected graph with at least two cycles.
	Then $\rho(T) < \rho(G)$.
\end{prop}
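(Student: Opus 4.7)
I plan to prove the inequality by exhibiting a positive sub-eigenfunction of $A_T$ with eigenvalue strictly less than $\rho(G)$. Let $y>0$ be the Perron eigenvector of $G$, so that $A_Gy=\rho(G)y$, and lift it pointwise to $\tilde y(\hat v):=y(\pi(\hat v))$ on $T$. Then $A_T\tilde y=\rho(G)\tilde y$ pointwise, but $\tilde y\notin\ell^2(T)$. Conjugating by the diagonal $D=\mathrm{diag}(\tilde y)$ recasts the adjacency as $A_T=\rho(G)\,D P_T D^{-1}$, where $P_T:=\rho(G)^{-1}D^{-1}A_TD$ is a row-stochastic Markov kernel on $T$, reversible with respect to $\tilde y^2$, $\pi_1(G)$-equivariant, and covering a Markov kernel $P_G$ on $G$ of spectral radius $1$. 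The goal reduces to showing $\rho_{\ell^2}(P_T)<1$, which by a standard Collatz--Wielandt-type argument for reversible chains follows once one produces $\mu:V(T)\to(0,\infty)$ with $P_T\mu\leq \lambda\mu$ pointwise for some $\lambda<1$; then $\rho(T)\leq \lambda\rho(G)<\rho(G)$.

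Constructing $\mu$ is the heart of the matter and uses the hypothesis ``at least two cycles'' essentially. Pick a spanning tree $S\subset G$; the $r:=|E(G)|-|G|+1\geq 2$ chord edges generate the deck group $\Gamma:=\pi_1(G)$ as a free group of rank $r$, and after rooting $T$, each $\hat v\in T$ carries a reduced word $g(\hat v)\in\Gamma$ in these generators. I would try $\mu(\hat v):=\phi(g(\hat v))$ for a positive function $\phi$ built from the word-length metric on $\Gamma$---for instance $\phi(g)=s^{|g|}$ with $s\in(0,1)$, possibly refined to assign distinct contraction factors to different generators in order to accommodate the Perron weights. Because $\mu$ is a pullback from $\Gamma$, the pointwise inequality $P_T\mu\leq\lambda\mu$ collapses, via $\Gamma$-equivariance, to a finite system of inequalities indexed by vertices of $G$, with coefficients depending only on $P_G$ and the chord labels. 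When $r\geq 2$, the non-amenability of the free group $F_r$ should furnish enough slack to solve this system strictly with some $\lambda<1$; the reference to ``Gabber's lemma'' in the introduction suggests that the correct orientation-plus-weight choice on the edges of $G$ corresponds precisely to such a $\mu$.

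The main obstacle I expect is verifying that the reduced finite system is \emph{uniformly} strictly solvable, i.e.\ that $\lambda<1$ can be achieved at every vertex of $G$ simultaneously, because the interaction between the Perron weights $y_v$ and the chord structure is delicate. A reassuring sanity check is the case $r\leq 1$: the group $\Gamma\in\{1,\Z\}$ is amenable, any word-length based $\mu$ yields $\lambda=1$ at best, and the equality $\rho(T)=\rho(G)$ of Proposition~\ref{prop:unicyclic} is recovered. On the other hand, the bowtie example cited in the introduction, where $\rho(G)-\rho(T)\approx 0.04$, warns that the gap can be extremely small, so the construction must produce a quantitative defect rather than merely an asymptotic one, and must be robust across the combinatorial variability of $G$.
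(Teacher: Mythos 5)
Your overall reduction is sound, and it is in essence the same mechanism as the paper's proof: producing a positive function $\mu$ on $T$ with $A_T\mu\le(\rho(G)-\epsilon)\mu$ pointwise and concluding by a Schur/Rayleigh test is equivalent to the paper's edge-by-edge bound $2|x_ux_v|\le\eta x_u^2+\eta^{-1}x_v^2$ with edge-dependent $\eta$ (the $\eta$'s are exactly the ratios $\mu(v)/\mu(u)$, and since $T$ is a tree any consistent choice of ratios integrates to a $\mu$). The genuine gap is in your construction of $\mu$. A test function $\mu(\hat v)=\phi(g(\hat v))$ pulled back from the deck group is constant on every copy of the lifted spanning tree. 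Hence at any lift $\hat v$ of a vertex of $G$ that is incident to no chord edge, every neighbour of $\hat v$ carries the same value of $\mu$, and stochasticity of $P_G$ forces $(P_T\mu)(\hat v)=\mu(\hat v)$ exactly, so no $\lambda<1$ is attainable --- non-amenability of $\pi_1(G)$ cannot be felt by a function that is blind to the interior of the fundamental domain. Such vertices are unavoidable: bridges lie in every spanning tree (so every pendant vertex is chord-free), and even for the paper's bowtie example the two chords can cover at most four of the five vertices, whatever spanning tree you pick. The trouble persists at chord-incident vertices as well: in the worst case the chord crossing cancels the last letter of $g(\hat v)$, giving a neighbour with $\mu$ larger by a factor $s^{-1}$, so the ``finite system'' (which, incidentally, is indexed not just by vertices of $G$ but by the worst case over cancelling letters) contains inequalities of the form $(1-p)+ps^{-1}\le\lambda$, again forcing $\lambda>1$ unless the spanning-tree terms themselves contract --- which, for a pullback, they do not.

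What is missing is therefore a weight construction on $G$ itself rather than on its fundamental group, and this is what the paper supplies. It keeps the lifted Perron eigenvector as the bulk of $\mu$ and perturbs it multiplicatively along each parent-to-child edge of $T$ by a factor $1+\Gamma(\pi(u),\pi(v))\gamma/(y_{\pi(u)}y_{\pi(v)})$ or $1+\Delta(\pi(u),\pi(v))\delta/(y_{\pi(u)}y_{\pi(v)})$, where $\Gamma$ and $\Delta$ are the directed edge weights of Lemmas \ref{lem:constant-factors-delta} and \ref{lem:constant-factors-gamma}: $\Gamma$ lives on the 2-core, takes values in $[1,2)$, and strictly increases in the flow sense along degree-two chains (this is where the hypothesis of at least two cycles enters, via the fact that every cycle of the 2-core contains a vertex of degree greater than $2$, so the $\epsilon$-increments terminate consistently), while $\Delta$ strictly decreases down the pendant trees. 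After invoking the eigenvector identity, the first-order terms in $\gamma$ and $\delta$ are strictly negative uniformly over the finitely many local patterns of $G$, and choosing $\gamma,\delta$ small absorbs the quadratic errors, yielding $g(u)\le\rho(G)-\epsilon$ at every vertex of $T$. This is the ``orientation-plus-weight'' choice in the spirit of Gabber's lemma that you correctly anticipated, but it is a perturbation distributed through the quotient graph, not a function of the reduced word; the right sanity check for the unicyclic case is then that strictly increasing weights around a cycle cannot exist when the 2-core is a single cycle, recovering Proposition \ref{prop:unicyclic}.
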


For the remainder of this section we assume $G$ is a finite and connected graph with
at least two cycles (which may intersect, may be loops, or cycles made by multi-edges).

The \textit{2-core} of $G$ is defined by the following procedure.
If $G$ has at least one leaf, pick an arbitrary leaf and delete it.
This operation may produce more leaves.
Repeat the leaf removal operation until there are no leaves.
The resulting subgraph of $G$ is its 2-core.

The 2-core of a graph is non-empty if and only if it contains a cycle. Moreover,
all cycles are preserved in its 2-core. Consequently, since $G$ has two distinct
cycles, so does its 2-core.

Let $G_{\text{int}}$ denote the 2-core of $G$.
Let $V^G_{\mathrm{int}}$ denote the vertices of $G_{\text{int}}$.
Let $E^G_{\mathrm{int}}$ be the edges of $G_{\text{int}}$ \emph{directed}
both ways, so that every edge $\{u,v\} \in G_{\text{int}}$ becomes two
directed edges $(u,v)$ and  $(v,u)$ in $E^G_{\mathrm{int}}$.

Denote by $V^G_{\mathrm{ext}}$ the vertices of $G\setminus G_{\text{int}}$.
Let $E^G_{\mathrm{ext}}$ be the edges of $G\setminus G_{\text{int}}$ such that
they are \emph{directed away} from the 2-core. This is possible because for
every edge $\{u,v\}$ in $G\setminus G_{\text{int}}$, there is a unique shortest
path from $G_{\text{int}}$ that terminates at $\{u,v\}$. The orientation of
$\{u,v\}$ is then in the direction this path enters the edge.
The figure below gives an illustration of these definitions.

\begin{figure}[hbtp]
\centering
\includegraphics{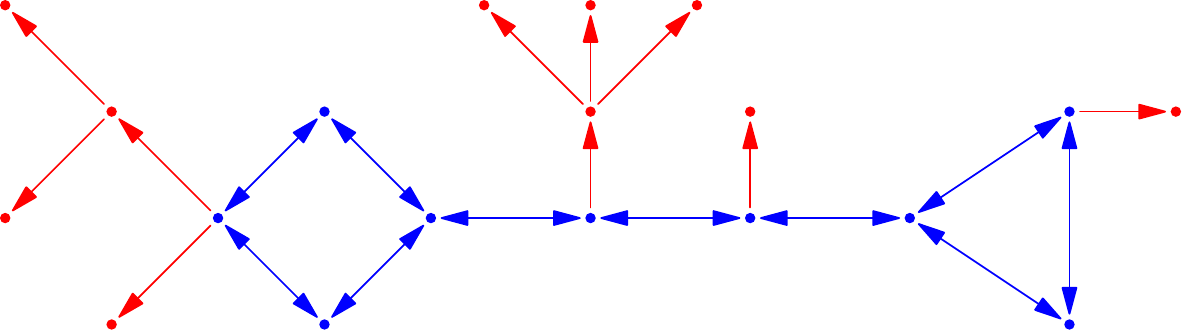}
\caption{ An example illustrating the definitions of
$V^G_{\mathrm{int}}$, $V^G_{\mathrm{ext}}$, $E^G_{\mathrm{int}}$, and $E^G_{\mathrm{ext}}$.
$V^G_{\mathrm{int}}$ and $E^G_{\mathrm{int}}$ are coloured blue.
$V^G_{\mathrm{ext}}$ and $E^G_{\mathrm{ext}}$ are coloured red.}
\end{figure}

\begin{lemma}\label{lem:constant-factors-delta}
There exists a function  $\Gamma: E^G_{\mathrm{int}} \to [1, 2)$
such that for each directed edge $(u,v) \in E^G_{\mathrm{int}}$,
\[\sum_{\substack{w:~(v,w)\in E^G_{\mathrm{int}} \\ w\neq u}} \Gamma(v,w) > \Gamma(u,v).\]
\end{lemma}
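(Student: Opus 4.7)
My plan is to seek $\Gamma$ of the form $\Gamma(u,v) = 1 + \eps\, h(u,v)$, where $h : E^G_{\mathrm{int}} \to \Z_{\geq 0}$ is a bounded function (say $h \leq M$) and $\eps > 0$ is chosen small enough that $\eps M < 1$. The condition $\eps M < 1$ immediately places every value of $\Gamma$ in $[1, 2)$, and reduces the lemma to verifying a strict inequality on each directed edge.

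The key structural input is that $G_{\mathrm{int}}$ is connected (iterated leaf removal preserves connectivity of a connected graph), has minimum degree at least $2$, and inherits every cycle of $G$. If every vertex of $G_{\mathrm{int}}$ had degree exactly $2$, then connectedness would force it to be a single cycle, contradicting the hypothesis that $G$ has at least two cycles. Thus $G_{\mathrm{int}}$ contains at least one \emph{branch vertex} of degree $\geq 3$, and every maximal path in $G_{\mathrm{int}}$ whose interior vertices have degree $2$ is a finite \emph{chain} $x_0, x_1, \ldots, x_{k+1}$ with branch endpoints $x_0, x_{k+1}$ (possibly equal, and possibly $k = 0$).

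I would define $h$ on each chain by $h(x_i, x_{i+1}) = i$ and $h(x_{i+1}, x_i) = k - i$ for $0 \leq i \leq k$, so that $h$ strictly increases in whichever direction a walker traverses the chain. The required inequality then splits into two cases. At a degree-$2$ vertex $v = x_i$ with incoming edge from $u = x_{i-1}$, the unique successor is $w = x_{i+1}$, and the inequality reduces to $1 + \eps\, i > 1 + \eps(i-1)$, which holds for every $\eps > 0$. At a branch vertex $v$ of degree $d(v) \geq 3$, the sum $\sum_{w \neq u} \Gamma(v, w) = (d(v)-1) + \eps \sum_{w \neq u} h(v, w)$ is at least $d(v) - 1 \geq 2$, while $\Gamma(u, v) \leq 1 + \eps M < 2$; strict inequality follows.

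The main obstacle, and the point where the ``at least two cycles'' hypothesis genuinely enters, is the structural claim that a branch vertex must exist: without it, $G_{\mathrm{int}}$ would be a single cycle, and one would need $h$ to strictly increase around a closed loop, which is impossible. Loops and multi-edges within chains are handled by minor bookkeeping, since they only produce additional copies of directed edges to which the same weighting scheme applies without modification.
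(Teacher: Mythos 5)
Your proposal is correct and is essentially the paper's own argument: the paper also sets $\Gamma=1$ on edges leaving vertices of degree greater than $2$ and propagates $\Gamma \mapsto \Gamma+\epsilon$ through degree-$2$ vertices, which is exactly your closed-form weighting $\Gamma = 1+\eps h$ along chains, with the same two-case verification and the same use of ``at least two cycles plus connectivity'' to rule out an all-degree-$2$ cycle. The only difference is presentational (explicit chain decomposition versus an iterative assignment), so there is nothing further to add.
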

Note $\Gamma$ is not symmetric, i.e. $\Gamma(u,v)$ need not equal $\Gamma(v,u)$.

\begin{proof}
Every vertex of $G_{\text{int}}$ has degree at least $2$ within this subgraph.
The following property of $G_{\text{int}}$ is crucial: since $G_{\text{int}}$ has at least two cycles and is connected,
every cycle of $G_{\text{int}}$ contains a vertex of degree more than 2.

For each directed edge $(u,v) \in E^G_{\mathrm{int}}$ with $\deg u > 2$, set $\Gamma(u,v) = 1$.
The remaining values of $\Gamma(u,v)$ will correspond to directed edges $(u,v)$ with $\deg u = 2$.
We assign these values by the following iterative procedure.

Fix an $\epsilon>0$ to be determined. If $\deg u = 2$, $u$ is adjacent to $v_1$ and $v_2$,
and $\Gamma(v_1,u)$ has been assigned, assign $\Gamma(u,v_2) = \Gamma(v_1,u) + \epsilon$.
Due to the aforementioned crucial property, this procedure assigns a value of the form $1+m\epsilon$ to every
$\Gamma(u,v)$ with $(u,v) \in E^G_{\mathrm{int}}$. Finally, since $G$ is finite we may choose $\epsilon$
small enough such that $\Gamma$ is strictly less than 2 everywhere.

Now if $(u,v)\in E^G_{\mathrm{int}}$ and $\deg v > 2$,
$$\sum_{\substack{w:~(v,w)\in E^G_{\mathrm{int}} \\ w\neq u}} \Gamma(v,w) \ge 2 > \Gamma(u,v).$$
If $(u,v)\in E^G_{\mathrm{int}}$ and $\deg v = 2$,
$$\sum_{\substack{w:~(v,w)\in E^G_{\mathrm{int}} \\ w\neq u}} \Gamma(v,w) = \Gamma(u,v) + \epsilon > \Gamma(u,v).$$
\end{proof}

\begin{lemma}\label{lem:constant-factors-gamma}
There exists a function $\Delta: E^G_{\mathrm{ext}} \to (0,1]$
such that for each directed edge $(u,v) \in E^G_{\mathrm{ext}}$,
\[\sum_{\substack{w:~(v,w)\in E^G_{\mathrm{ext}}}} \Delta(v,w) < \Delta(u,v).\]
\end{lemma}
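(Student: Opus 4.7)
The plan is to exploit the fact that $G \setminus G_{\mathrm{int}}$ is a forest ``hanging off'' the 2-core. Since every cycle of $G$ is preserved in passing to the 2-core, $G \setminus G_{\mathrm{int}}$ is acyclic; and the definition of $E^G_{\mathrm{ext}}$ orients each such edge along the unique shortest path that enters it from $V^G_{\mathrm{int}}$, so edges are directed outward from the 2-core. Each $v \in V^G_{\mathrm{ext}}$ therefore carries a well-defined depth $d(v) \ge 1$, equal to its distance to $V^G_{\mathrm{int}}$, and every $(u,v) \in E^G_{\mathrm{ext}}$ satisfies $d(v) = d(u) + 1$ (with the convention $d(u) = 0$ when $u \in V^G_{\mathrm{int}}$). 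In particular $E^G_{\mathrm{ext}}$ splits into a disjoint union of out-arborescences rooted on $V^G_{\mathrm{int}}$.

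With this structure in hand, I will define $\Delta$ to decay exponentially in depth. Let $D$ denote the maximum degree of $G$ and fix a constant $c \in (0, 1/D]$, for instance $c = 1/D$; then set
\[
\Delta(u,v) = c^{d(v)} \quad \text{for every } (u,v) \in E^G_{\mathrm{ext}}.
\]
Since $d(v) \ge 1$, automatically $\Delta(u,v) \in (0, c] \subset (0,1]$. Fix $(u,v) \in E^G_{\mathrm{ext}}$ and let $k$ be the number of $w$ with $(v,w) \in E^G_{\mathrm{ext}}$; because $v$ has at least one incoming edge, $k \le D - 1$. The required inequality becomes $k c \cdot c^{d(v)} < c^{d(v)}$, i.e.\ $kc < 1$, which holds with the chosen $c$. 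When $v$ is a leaf of the external forest, $k = 0$ and the inequality is immediate from $\Delta(u,v) > 0$.

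The only conceptual step that really needs care is the structural observation above: once one sees that the orientation rule makes $E^G_{\mathrm{ext}}$ a union of out-arborescences, the inequality reduces to a one-line geometric-series estimate, so there is no genuine obstacle to overcome. This is in stark contrast with Lemma~\ref{lem:constant-factors-delta}, whose proof needs the ``$+\epsilon$'' trick because the local sum there can a priori equal $\Gamma(u,v)$; here the sum shrinks as one walks outward along the arborescence, and strict inequality in the desired direction comes for free.
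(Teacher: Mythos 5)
Your proof is correct and takes essentially the same approach as the paper: both exploit that $E^G_{\mathrm{ext}}$ decomposes into out-arborescences rooted on the 2-core and assign weights that decay geometrically down these trees so that the children's sum is strictly less than the parent's weight. The only difference is cosmetic — the paper recursively sets $\Delta(v,w) = \frac{1}{d+1}\Delta(u,v)$ using the local out-degree $d$, while you use the uniform factor $c = 1/D$ per level of depth.
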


\begin{proof}
The edges in $E^G_{\mathrm{ext}}$ form trees, rooted at vertices in $V^G_{\mathrm{int}}$ and directed toward the leaves.

For each edge $(u,v)\in E^G_{\mathrm{ext}}$, where $u\in V^G_{\mathrm{int}}$, set $\Delta(u,v) = 1$.
Assign the remaining variables by recursing down the trees in the following way.
If $\Delta(u,v)$ has been assigned and $v$ has $d$ out-edges $(v,w)\in E^G_{\mathrm{ext}}$,
set $\Delta(v,w) = \frac{1}{d+1}\Delta(u,v)$ for each out-edge $(v,w)$.
Then,
$$\sum_{\substack{w:~(v,w)\in E^G_{\mathrm{ext}}}} \Delta(v,w) = \frac{d}{d+1} \Delta(u,v) < \Delta(u,v),$$
so the desired inequality holds.
\end{proof}

\paragraph{\textbf{Proof of Proposition \ref{prop:multicyclic}}}
Let $y$ be the eigenvector of the maximal eigenvalue of $G$ chosen
such that all its entries are positive and $|| y || = 1$.
Note this identity for every vertex $u\in G$:
\begin{equation}\label{eqn:eigenvector}
\sum_{\substack{v:~\{u,v\}\in G}} \frac{y_{v}}{y_{u}} = \rho(G).
\end{equation}

Root $T$ at any vertex $r$ such that $\pi(r)\in V^G_{\mathrm{int}}$.
For the rest of this proof, when we refer to an edge $(u,v)\in T$ the first vertex $u$ is the parent,
that is, closer to the root than $v$.

Let $V^T_{\mathrm{int}}$ be the vertices in $T$ with infinitely many descendants,
and $V^T_{\mathrm{ext}}$ be the vertices in $T$ with finitely many descendants.
Let $E^T_{\mathrm{int}}$ denote the edges $(u,v)\in T$ with $v\in V^T_{\mathrm{int}}$,
and $E^T_{\mathrm{ext}}$ the edges $(u,v)\in T$ with $v\in V^T_{\mathrm{ext}}$.

Observe that $u\in V^T_{\mathrm{int}}$ (resp. $V^T_{\mathrm{ext}}$)
if and only if $\pi(u)\in V^G_{\mathrm{int}}$ (resp. $V^G_{\mathrm{ext}}$).
Similarly, $(u,v)\in E^T_{\mathrm{int}}$ if and only if $(\pi(u),\pi(v)) \in E^G_{\mathrm{int}}$,
and $(u,v)\in E^T_{\mathrm{ext}}$ if and only if $(\pi(u),\pi(v)) \in E^G_{\mathrm{ext}}$.
In the latter case it is crucial that $u$ is the parent of $v$;
this requires $v$ to be farther than $u$ from $V^T_{\mathrm{int}}$,
so $\pi(v)$ is farther than $\pi(u)$ from $V^G_{\mathrm{int}}$.
Thus $(\pi(u),\pi(v))$ has the necessary orientation of an edge in $E^G_{\mathrm{ext}}$.

Consider the functions $\Gamma$ and $\Delta$ from
Lemmas~\ref{lem:constant-factors-delta} and \ref{lem:constant-factors-gamma}.
Let $\gamma, \delta >0$ be (small) constants to be determined later.
Throughout the following argument we will use that
$$2|ab| \leq \eta a^2 + \eta^{-1} b^2 \quad \text{for}\;\; \eta > 0.$$

For each edge $(u,v)\in E^T_{\mathrm{int}}$, we have
\begin{equation}\label{eqn:estimate-int}
2|x_ux_v|
\le
\frac{y_{\pi(v)}}{y_{\pi(u)}}\left(1 + \frac{\Gamma(\pi(u),\pi(v))\gamma}{y_{\pi(u)}y_{\pi(v)}}\right)^{-1}x_u^2
+
\frac{y_{\pi(u)}}{y_{\pi(v)}}\left(1 + \frac{\Gamma(\pi(u),\pi(v))\gamma}{y_{\pi(u)}y_{\pi(v)}}\right)x_v^2.
\end{equation}
Analogously, for each edge $(u,v)\in E^T_{\mathrm{ext}}$,
\begin{equation}\label{eqn:estimate-ext}
2|x_ux_v|
\le
\frac{y_{\pi(v)}}{y_{\pi(u)}}\left(1 + \frac{\Delta(\pi(u),\pi(v))\delta}{y_{\pi(u)}y_{\pi(v)}}\right)x_u^2
+
\frac{y_{\pi(u)}}{y_{\pi(v)}}\left(1 + \frac{\Delta(\pi(u),\pi(v)) \delta}{y_{\pi(u)}y_{\pi(v)}}\right)^{-1}x_v^2.
\end{equation}
The quantity $\Delta(\pi(u),\pi(v))$ is defined because $(\pi(u),\pi(v))$ has the correct orientation of an edge in $E^G_{\mathrm{ext}}$, as noted above.

Recall $f_T$ from \eqref{eqn:f}. The estimates (\ref{eqn:estimate-int}) and (\ref{eqn:estimate-ext}) imply that
\begin{equation}\label{eqn:main-bound}
|f_T(x)| \leq 2 \sum_{\{u,v\} \in T} |x_ux_v| \le \sum_{u\in T}g(u) x_u^2,
\end{equation}
where $g(u)$ is as follows.
Let $\pa(u)$ denote the parent of vertex $u \in T$ and $\ch(u)$ denote the set of all children of $u$.
If $u\in V^T_{\mathrm{int}}$ then
\begin{align*}\label{eqn:def-g-int}
\begin{split}
g(u)
&= \frac{y_{\pi(\pa(u))}}{y_{\pi(u)}} \left(1 + \frac{\Gamma(\pi(\pa(u)),\pi(u)) \gamma}{y_{\pi(\pa(u))}y_{\pi(u)}}\right)
+ \sum_{c\in \ch(u) \cap V^T_{\mathrm{int}}} \frac{y_{\pi(c)}}{y_{\pi(u)}}\left(1 + \frac{\Gamma(\pi(u),\pi(c))\gamma}{y_{\pi(u)}y_{\pi(c)}}\right)^{-1}
\\
&+ \sum_{d\in \ch(u) \cap V^T_{\mathrm{ext}}} \frac{y_{\pi(d)}}{y_{\pi(u)}}\left(1 + \frac{\Delta(\pi(u),\pi(d)) \delta}{y_{\pi(u)}y_{\pi(d)}}\right).
\end{split}
\end{align*}
If $u\in V^T_{\mathrm{ext}}$ then
\begin{equation*}\label{eqn:def-g-ext}
g(u)
= \frac{y_{\pi(\pa(u))}}{y_{\pi(u)}} \left(1 + \frac{\Delta (\pi(\pa(u)),\pi(u)) \delta}{y_{\pi(\pa(u))}y_{\pi(u)}}\right)^{-1}
+ \sum_{d\in \ch(u)} \frac{y_{\pi(d)}}{y_{\pi(u)}}\left(1 + \frac{\Delta (\pi(u),\pi(d)) \delta}{y_{\pi(u)}y_{\pi(d)}}\right).
\end{equation*}

Due to \eqref{eqn:main-bound}, the proposition will be proved by
showing that $g(u)$ is uniformly bounded away from $\rho(G)$ over all vertices $u$.
We separately consider the two cases $u\in V^T_{\mathrm{int}}$ and $u\in V^T_{\mathrm{ext}}$.

Suppose $u\in V^T_{\mathrm{int}}$.
Then for all sufficiently small $\gamma>0$ we have the bound
\begin{equation}\label{eqn:inv-estimate}
\sum_{c\in \ch(u) \cap V^T_{\mathrm{int}}}
\frac{y_{\pi(c)}}{y_{\pi(u)}}
\left(1 + \frac{\Gamma(\pi(u),\pi(c))\gamma}{y_{\pi(u)}y_{\pi(c)}}\right)^{-1}
\le
\sum_{c\in \ch(u) \cap V^T_{\mathrm{int}}}
\frac{y_{\pi(c)}}{y_{\pi(u)}}
\left(1 - \frac{\Gamma(\pi(u),\pi(c))\gamma}{y_{\pi(u)}y_{\pi(c)}}\right)
+ C_u\gamma^2,
\end{equation}
for some constant $C_u \geq 0$ depending on $u$.

The terms in (\ref{eqn:inv-estimate}) depend only on the vertices $\pi(u)$ and $\pi(c)$ for $c \in \ch(u)$.
These are vertices of $G$ and, since $G$ is finite, there are only finitely many
distinct values of $C_u$. Let $C$ be the maximum of the $C_u$s. In the inequality \eqref{eqn:inv-estimate}
we may replace every $C_u$ by $C$, as we do henceforth.

Inequality \eqref{eqn:inv-estimate} implies the following
bound for every $u\in V^T_{\mathrm{int}}$ and all sufficiently small $\gamma > 0$.
\begin{align}\label{eqn:estimate-g-int}
\begin{split}
g(u)
&\le
\rho(G)
+ \frac{\gamma}{y_{\pi(u)}^2}
\left[\Gamma(\pi(\pa(u)),\pi(u)) - \sum_{c\in \ch(u) \cap V^T_{\mathrm{int}}} \Gamma(\pi(u),\pi(c))\right] \\
&+ \frac{\delta}{y_{\pi(u)}^2}
\sum_{d\in \ch(u) \cap V^T_{\mathrm{ext}}} \Delta(\pi(u),\pi(d))
+ C\gamma^2.
\end{split}
\end{align}
This is obtained by substituting \eqref{eqn:inv-estimate} into the definition of $g(u)$,
then multiplying out the terms and simplifying the sums by using the eigenvector
equation \eqref{eqn:eigenvector}.

By Lemma \ref{lem:constant-factors-delta},
\begin{equation*}
\Gamma(\pi(\pa(u)),\pi(u)) - \sum_{c\in \ch(u) \cap V^T_{\mathrm{int}}} \Gamma(\pi(u),\pi(c)) < 0
\end{equation*}
for every $u\in V^T_{\mathrm{int}}$. Moreover, as $u$ ranges over $ V^T_{\mathrm{int}}$ the quantities
\begin{equation*}
u \mapsto \; \frac{1}{y_{\pi(u)}^2}
\left[\Gamma(\pi(\pa(u)),\pi(u)) - \sum_{c\in \ch(u) \cap V^T_{\mathrm{int}}} \Gamma(\pi(u),\pi(c))\right]
\end{equation*}
are determined by the graph $G$. So they attain finitely many values and have
a maximum value $C_{\text{int}} < 0$. Analogously, the quantities
\begin{equation}
u \mapsto \; \frac{1}{y_{\pi(u)}^2}
\sum_{d\in \ch(u) \cap V^T_{\mathrm{ext}}} \Delta(\pi(u),\pi(d))
\end{equation}
have a maximum value $D_{\text{int}} \geq 0$ as $u$ ranges over $V^T_{\mathrm{int}}$.
So we infer that for every $u\in V^T_{\mathrm{int}}$ and all sufficiently small $\gamma>0$,
\begin{equation}\label{eqn:estimate-g-int-final}
g(u) \le \rho(G) + C_{\text{int}} \gamma + C\gamma^2 + D_{\text{int}} \delta.
\end{equation}

Now suppose that $u\in V^T_{\mathrm{ext}}$.
By an analogous argument as above, there exists a constant $D \geq 0$ independently of $u$
such that for all sufficiently small $\delta>0$,
\begin{equation*}\label{eqn:estimate-g-ext}
g(u) \le \rho(G) +
\frac{ \delta}{y_{\pi(u)}^2} \left[ - \Delta(\pi(\pa(u)),\pi(u)) + \sum_{d\in \ch(u)} \Delta(\pi(u),\pi(d)) \right] + D\delta^2.
\end{equation*}
By Lemma \ref{lem:constant-factors-gamma},
\begin{equation*}
- \Delta(\pi(\pa(u)),\pi(u)) + \sum_{d\in \ch(u)} \Delta(\pi(u),\pi(d)) < 0
\end{equation*}
for all $u\in V^T_{\mathrm{ext}}$.
Therefore, as before, there is a $D_{\text{ext}} < 0$ such that for every $u\in V^T_{\mathrm{ext}}$
and all sufficiently small $\delta>0$,
\begin{equation}\label{eqn:estimate-g-ext-final}
g(u) \le \rho(G) + D_{\text{ext}} \delta + D\delta^2.
\end{equation}

Finally, we select $\gamma>0$ small enough that (\ref{eqn:estimate-g-int-final}) holds and
$C_{\text{int}}\gamma + C\gamma^2<0$. This is possible because $C_{\text{int}}<0$.
Then we select $\delta>0$ small enough such that (\ref{eqn:estimate-g-ext-final}) holds while
both $C_{\text{int}} \gamma + C\gamma^2 + D_{\text{int}} \delta < 0$ and
$D_{\text{ext}} \delta + D\delta^2 < 0$. This is possible due to the choice of $\gamma$
and because $D_{\text{ext}}<0$.

In light of (\ref{eqn:estimate-g-int-final}) and (\ref{eqn:estimate-g-ext-final}),
our choice of $\gamma$ and $\delta$ above imply that there is an $\epsilon > 0$
such that for every vertex $u \in T$, $g(u) \le \rho(G)-\epsilon$. This implies $\rho(T) < \rho(G)$.
\qed

\section{Future directions} \label{sec:conc}
It would be interesting to find an effective version of Theorem \ref{thm:0} in the following sense.
Let $G_1, G_2, G_3, \ldots$ be finite, connected graphs with $|G_n| \to \infty$.
Suppose they have a common universal cover $T$, and are Ramanujan in that all but their largest
eigenvalue are at most $\rho(T)$ in absolute value. What is the ``essential girth" of $G_n$ in terms
of its size, which is to say, the asymptotic girth after possibly having removed at order of $o(|G_n|)$ edges?
For $d$-regular Ramanujan graphs it is known, see \cite{AGV}, that the essential girth is at least
of order $\log \log |G|$. All known constructions provide graphs with girth of order $\log |G|$.
It seems that a lower bound of order $\log |G|$ for the girth is unknown even when $G$ is a
Cayley graph.

It would also be interesting to find an effective form of Theorem \ref{thm:2} in terms of
the size and the maximal degree of $G$. The theorem is in some ways an analogue of
Theorem \ref{thm:1} for finite graphs, although, its word-for-word reformulation is false
for infinite graphs. In this regard it would be interesting to prove a spectral gap
between $\rho(G)$ and $\rho(T)$ under natural hypotheses on an infinite graph $G$.
For instance, to prove an effective spectral gap when there is an $R$ such that the
$R$-neighbourhood of every vertex in $G$ contains a cycle.

\section*{Acknowledgements}

The authors gratefully acknowledge the MIT Undergraduate Research Opportunities
Program in which some of this work was completed.  The first author thanks
Ryan Alweiss for many helpful conversations over the course of this work.
Thanks also to an anonymous referee for suggestions leading to various simplifications.

\end{document}